\documentclass[11pt]{amsart}
\usepackage[utf8]{inputenc}
\usepackage[english]{babel}
\usepackage[a4paper,top=2.9cm,bottom=2.9cm,left=3cm,right=3cm,marginparwidth=1.75cm]{geometry}
\usepackage[backend=biber, doi=false, url=false, isbn=false, style=trad-abbrv, giveninits]{biblatex} %
\usepackage{csquotes}

\usepackage{amsmath}
\usepackage{enumitem}
\usepackage{graphicx}
\usepackage[colorlinks=true, allcolors=blue]{hyperref}
\usepackage{amsfonts,bm}
\usepackage{amsbsy,amsthm,mathtools}
\usepackage{stmaryrd} %
\usepackage{latexsym,amsmath,amssymb,verbatim}
\usepackage{tikz}
\usepackage{ytableau} %

\theoremstyle{plain}
   
   \newtheorem{theorem}{Theorem}[section]

   \newtheorem{lemma}[theorem]{Lemma}
   \newtheorem{corollary}[theorem]{Corollary}
   
   \newtheorem{problem}[theorem]{Problem}
   \newtheorem{observation}[theorem]{Observation}
   
\theoremstyle{definition}
   
   \newtheorem{defn}[theorem]{Definition}
   \newtheorem{example}[theorem]{Example}

   \newtheorem{remark}[theorem]{Remark}

\newcommand\suchthat{\:\ifnum\currentgrouptype=16\middle\fi|\:}

\DeclareMathOperator{\wt}{wt}

\newcommand{\SSYT}{{\operatorname{SSYT}}}
\newcommand{\rSSYT}{{\operatorname{rSSYT}}}

\newcommand{\Leg}{{\operatorname{Leg}}}
\newcommand{\Num}{{\operatorname{Num}}}
\newcommand{\Den}{{\operatorname{Den}}}
\newcommand{\LTplus}{L_{T}^{+}}
\newcommand{\Lplus}{L}
\newcommand{\HLS}{{\operatorname{HLS}}}
\newcommand{\WO}{{\operatorname{WO}}}
\newcommand{\Mult}{{\operatorname{Mult}}}
\newcommand{\supp}{{\operatorname{supp}}}
\newcommand{\lerl}{\mathrel{\le_{\mathrm{rl}}}}
\newcommand{\ltrl}{\mathrel{<_{\mathrm{rl}}}}
\newcommand{\letb}{\mathrel{\le_{t}}}
\newcommand{\lttb}{\mathrel{<_{t}}}
\newcommand{\tcol}{\mathrel{\sim_{\mathrm{set}}}}

\newcommand{\bfn}{{\underline{n}}}
\newcommand{\bfr}{{\underline{r}}}
\newcommand{\bfX}{{\bm{X}}}
\newcommand{\bfY}{{\bm{Y}}}
\newcommand{\bfzero}{{\bm{0}}}
\newcommand{\bfone}{{\bm{1}}}

\newcommand{\Pt}{P}%
\newcommand{\OC}{\Delta} %
\newcommand{\Sc}{S^c}%

\def\NN{{\mathbb N}}
\def\ZZ{{\mathbb Z}}
\def\QQ{{\mathbb Q}}

\makeatletter
\@namedef{subjclassname@2020}{2020 MSC}
\makeatother

\title{Reciprocity of Skew Hall--Littlewood--Schubert Series}%

\author{Ron M.\ Adin}
\address{Department of Mathematics, Bar-Ilan University, 
	Ramat-Gan 5290002, Israel}
\email{radin@math.biu.ac.il}

\author{Tomer Bauer}
\address{Department of Mathematics, Bar-Ilan University, 
	Ramat-Gan 5290002, Israel}
\email{mathzeta2@gmail.com}

\subjclass[2020]{Primary: 05A15. Secondary: 05A30, 05E45, 06A07}

\keywords{Hall--Littlewood functions, zeta functions of groups and rings, zeta and M\"obius functions of posets, $q$-analogs, semistandard tableaux, combinatorial reciprocity, functional equations, order complex, flag manifold, Macdonald polynomials}

\date{March 31, 2026}
\renewbibmacro{in:}{}

\begin{document}

\begin{abstract}
    Carnevale, Schein and Voll proved self-reciprocity of the generalized Igusa functions, and Maglione and Voll did the same for the Hall--Littlewood--Schubert series.
    We introduce a simultaneous generalization and refinement of these two rational functions, 
    and prove that it satisfies a self-reciprocity property. 
    This answers a problem posed by Maglione and Voll.
    Our method of proof is elementary, avoiding the use of $p$-adic integration.
\end{abstract}

\maketitle

\tableofcontents

\section{Introduction}

Generalized Igusa functions were introduced by Carnevale, Schein and Voll~\cite{CSV/19}.
Hall--Littlewood--Schubert series were introduced by Maglione and Voll~\cite{MaglioneVoll/24}.
Both are 
multivariate rational functions.
In each case, the authors proved a self-reciprocity result and provided many applications to enumeration problems in geometry, number theory, and algebra;
in particular, to explicit computations of zeta functions of groups, rings, and quiver representations.

The main purpose of the current work is to introduce a simultaneous generalization and refinement of these two rational functions, the \emph{skew Hall--Littlewood--Schubert series} $\HLS_{\bfn, \bfr}$ (see Definition~\ref{def:skewHLS}), and prove its self-reciprocity (Theorem~\ref{t:skewHLS_reciprocity}). 
In addition, our method of proof is elementary, avoiding $p$-adic integration which was used in~\cite{MaglioneVoll/24}.
The proof is modeled after a property of the M\"obius function of a partially ordered set, and was inspired by an idea of Sanyal.

Let us now describe briefly the main objects and results. More extensive definitions 
will be given in Section~\ref{sec:HLS_def}.
Let $\NN_0$ denote the set of non-negative integers.

\begin{defn}\label{def:Pnr_intro}
    Let $n, r \in \NN_0$.
    \begin{enumerate}[label=(\alph*)]
    
        \item
        Denote by $\Pt_{n,r}$ the set of all tuples $a = (a_0, a_1, \dots, a_n)$ of non-negative integers, where $a_0 \le r$, and $a_i \le 1$ for $1 \le i \le n$.
        Each element $a \in \Pt_{n,r}$ can be interpreted as the characteristic vector of a multiset containing at most $r$ copies of $0$, and at most one copy of each integer $1 \le i \le n$. In other words, it is a sub-multiset of $E_{n,r} \coloneqq \{0^r, 1, \dots, n\}$.

        \item
        For elements $a = (a_0, a_1, \dots, a_n)$ and $b = (b_0, b_1, \dots, b_n)$ in $\Pt_{n,r}$, write $a \le_t b$ if 
        \[
            \sum_{j=0}^{i} a_{j}
            \le \sum_{j=0}^{i}b_{j}
            \qquad (\forall\, 0\le i \le n).
        \]
    \end{enumerate}
\end{defn}

The relation $\letb$ is a partial order on $\Pt_{n,r}$, which we call the \emph{tableau order}. The choice of the name is explained in Section~\ref{sec:tableau-interpretation}.
The poset $(\Pt_{n,r}, \letb)$ has a unique minimal element $\varnothing$ and a unique maximal element $E_{n,r}$.

\begin{example}
    For $n = r = 2$, the partial order on the 12 elements of $\Pt_{2,2}$ is illustrated in Figure~\ref{fig:P22-Hasse}.
    Note that $(1,0,0)$ and $(0,1,1)$ are incomparable, and $(2,0,0)$ and $(1,1,1)$ are incomparable. We write sub-multisets without commas and parenthesis (e.g., $0^212$ for $E_{2,2}$).
    In terms of the corresponding multisets, 
    with elements written in non-decreasing order, 
    $0$ and $12$ are incomparable, and $0^2$ and $012$ are incomparable. 

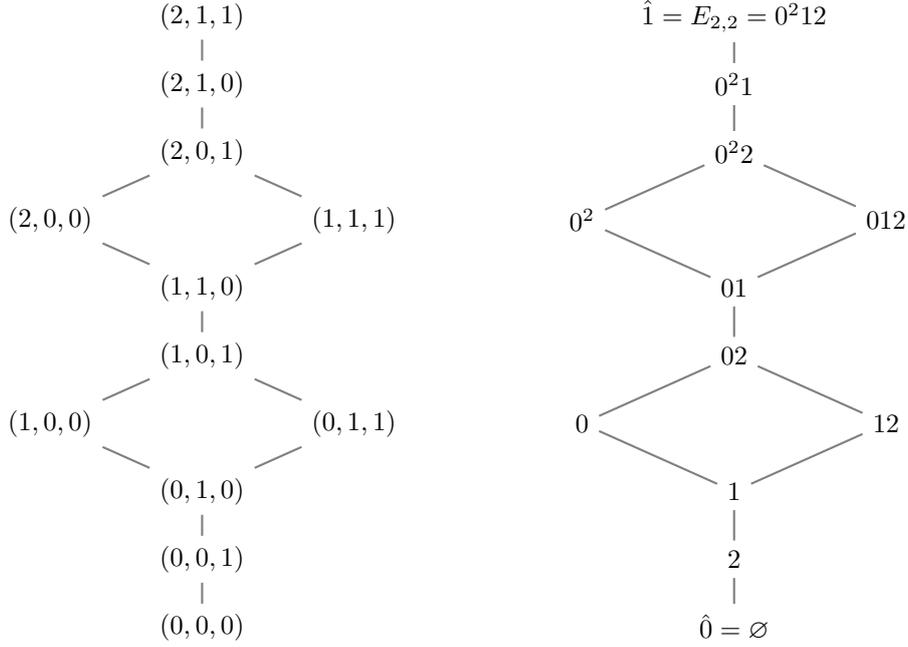
\begin{figure}
    \centering
    \begin{tikzpicture}[yscale=0.9, font=\small]
\begin{scope}
    \node at (0, 0) (000) {$(0,0,0)$};
    \node at (0, 1) (001) {$(0,0,1)$};
    \node at (0, 2) (010) {$(0,1,0)$};
    \node at (-2, 3) (100) {$(1,0,0)$};
    \node at (2, 3) (011) {$(0,1,1)$};
    \node at (0, 4) (101) {$(1,0,1)$};
    \node at (0, 5) (110) {$(1,1,0)$};
    \node at (-2, 6) (200) {$(2,0,0)$};
    \node at (2, 6) (111) {$(1,1,1)$};
    \node at (0, 7) (201) {$(2,0,1)$};
    \node at (0, 8) (210) {$(2,1,0)$};
    \node at (0, 9) (211) {$(2,1,1)$};
    \draw[color=gray,thick] (000) -- (001);
    \draw[color=gray,thick] (001) -- (010);
    \draw[color=gray,thick] (010) -- (100);
    \draw[color=gray,thick] (010) -- (011);
    \draw[color=gray,thick] (100) -- (101);
    \draw[color=gray,thick] (011) -- (101);
    \draw[color=gray,thick] (101) -- (110);
    \draw[color=gray,thick] (110) -- (200);
    \draw[color=gray,thick] (110) -- (111);
    \draw[color=gray,thick] (200) -- (201);
    \draw[color=gray,thick] (111) -- (201);
    \draw[color=gray,thick] (201) -- (210);
    \draw[color=gray,thick] (210) -- (211);
\end{scope}
\begin{scope}[xshift=7cm]
    \node at (0, 0) (000) {$\hat{0} = \varnothing$};
    \node at (0, 1) (001) {$2$};
    \node at (0, 2) (010) {$1$};
    \node at (-2, 3) (100) {$0$};
    \node at (2, 3) (011) {$12$};
    \node at (0, 4) (101) {$02$};
    \node at (0, 5) (110) {$01$};
    \node at (-2, 6) (200) {$0^2$};
    \node at (2, 6) (111) {$012$};
    \node at (0, 7) (201) {$0^22$};
    \node at (0, 8) (210) {$0^21$};
    \node at (0, 9) (211) {$\hat{1} = E_{2,2} = 0^212$};
    \draw[color=gray,thick] (000) -- (001);
    \draw[color=gray,thick] (001) -- (010);
    \draw[color=gray,thick] (010) -- (100);
    \draw[color=gray,thick] (010) -- (011);
    \draw[color=gray,thick] (100) -- (101);
    \draw[color=gray,thick] (011) -- (101);
    \draw[color=gray,thick] (101) -- (110);
    \draw[color=gray,thick] (110) -- (200);
    \draw[color=gray,thick] (110) -- (111);
    \draw[color=gray,thick] (200) -- (201);
    \draw[color=gray,thick] (111) -- (201);
    \draw[color=gray,thick] (201) -- (210);
    \draw[color=gray,thick] (210) -- (211);
\end{scope}
\end{tikzpicture}
    \caption{Hasse diagrams of $\Pt_{2,2}$. On the left  the elements are $3$-tuples, and on the right the elements are the corresponding sub-multisets of $E_{2,2}$.
    }
    \label{fig:P22-Hasse}
\end{figure}

\end{example}

More generally, fix a pair of $g$-tuples $\bfn, \bfr \in \NN_0^g$, say $\bfn = (n_1, \dots, n_g)$ and $\bfr = (r_1, \dots, r_g)$. Consider the direct product of posets
\begin{equation}\label{eq:Pbfnbfr}
    \Pt_{\bfn, \bfr} \coloneqq \Pt_{n_1, r_1} \times \dots \times \Pt_{n_g, r_g},
\end{equation}
where each component $\Pt_{n_i, r_i}$ has the tableau order $\letb$, and $\Pt_{\bfn, \bfr}$ has the product order:
\[
    (a^{(1)}, \dots, a^{(g)}) \le (b^{(1)}, \dots, b^{(g)})
    \ \iff\  (\forall i)\  a^{(i)} \letb b^{(i)}.
\]
We write $\letb$ also for the order in $\Pt_{\bfn, \bfr}$. Note that $\Pt_{\bfn, \bfr}$ has a unique minimal element $\hat{0} = (\varnothing, \dots, \varnothing)$ and a unique maximal element $\hat{1} = (E_{n_1, r_1}, \dots, E_{n_g, r_g})$.

We introduce $\prod_{i=1}^g (r_i+1)2^{n_i}$ variables~$\bfX = (X_c)$, where $c \in \Pt_{\bfn, \bfr}$, and $\sum_{i=1}^{g}(n_i + 1)$ variables $\bfY = (Y_{i, j})$, where $i \in [g] \coloneqq \{1, \dots, g\}$ and 
$0\le j \le n_i$.
For every chain $C$ in the half-open interval $(\hat{0},\hat{1}] \subset \Pt_{\bfn, \bfr}$, define a polynomial $W_C(\bfY) \in \ZZ[\bfY]$. The precise definition is quite intricate, and is deferred to Section~\ref{sec:HLS_def} (see Definition~\ref{def:W_poly}).

\begin{defn}\label{def:skewHLS_intro}
    The \emph{skew Hall--Littlewood--Schubert series} for $(\bfn, \bfr)$ is
    \[
    \HLS_{\bfn, \bfr}(\bfY; \bfX)
        \coloneqq
        \sum_{C} W_C(\bfY) \prod_{c \in C} \frac{X_c}{1-X_c} 
        \in \ZZ[\bfY](\bfX),
    \]
    where the summation is over all the strict chains $C$ in the interval $(\hat{0},\hat{1}] \subset \Pt_{\bfn, \bfr}$.
\end{defn}

The motivation behind Definition~\ref{def:skewHLS_intro} comes from two specializations, described in Section~\ref{sec:specializations}:
generalized Igusa functions~\cite[Definition~3.5]{CSV/19} 
and Hall--Littlewood--Schubert series~\cite[Definition~1.2]{MaglioneVoll/24}.
Definition~\ref{def:skewHLS_intro}, as a simultaneous generalization of these specializations, responds to a challenge 
posed in \cite[Remark~1.10]{MaglioneVoll/24}.

Write
$\bfY^{-1}$ for the set of inverses $Y_{i,j}^{-1}$ of the variables $Y_{i,j}$ in $\bfY$, and similarly $\bfX^{-1}$ for $\bfX$.
Our main result is the following.

\begin{theorem}[Skew $\HLS$ reciprocity]\label{t:skewHLS_reciprocity}
For any $\bfn, \bfr \in \NN_0^g$, the skew Hall--Littlewood--Schubert series satisfies
    \[
    \HLS_{\bfn, \bfr}(\bfY^{-1}; \bfX^{-1}) 
    = (-1)^{N}X_{\hat{1}}
    K(\bfY)^{-1}
    \cdot \HLS_{\bfn, \bfr}(\bfY;\bfX) ,
    \]
where $N=\sum_{i=1}^{g}(n_{i} + r_{i})$ and
\[
    K(\bfY)
    = %
    \prod_{i=1}^{g} \Bigg( Y_{i, 0}^{\binom{r_{i}}{2}} \prod_{j=1}^{n_i} Y_{i, j}^{r_i + j - 1} \Bigg).
\]
\end{theorem}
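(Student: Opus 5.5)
We will follow the standard chain-sum reciprocity pattern (as in Stanley's reciprocity for order-complex generating functions), using the structure of $W_C$ from Definition~\ref{def:W_poly}. The first step is to invert each factor: since
\[
\frac{X_c^{-1}}{1-X_c^{-1}} = -1-\frac{X_c}{1-X_c},
\]
substituting $\bfX^{-1}$ and expanding gives
\[
\HLS_{\bfn,\bfr}(\bfY^{-1};\bfX^{-1}) = \sum_{D}\Big(\sum_{C\supseteq D}(-1)^{|C|}W_C(\bfY^{-1})\Big)\prod_{d\in D}\frac{X_d}{1-X_d}.
\]
Here $D\subseteq C$ runs over chains in $(\hat 0,\hat 1]$. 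The element $\hat 1$ requires separate treatment. Since $X_{\hat 1}\cdot\frac{X_{\hat1}}{1-X_{\hat1}}$ and $\frac{X_{\hat1}}{1-X_{\hat1}}$ differ by $-X_{\hat 1}$, the right-hand side naturally pairs chains containing $\hat 1$ with those not containing it. So we split the sum by whether $\hat 1\in C$ and $\hat1\in D$, and reduce the theorem to the coefficient identity
\[
\sum_{C\supseteq D}(-1)^{|C|}W_C(\bfY^{-1}) = (-1)^{N}K(\bfY)^{-1}\,\widetilde W_D(\bfY),
\]
where $\widetilde W_D$ is $W_D$ adjusted for the $\hat1$-shift.

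The second step is to localize. If $W_C$ factors over the gaps of the chain, i.e.\ $W_C=\prod w(c_{k-1},c_k)$ over consecutive elements of $\hat0=c_0<c_1<\dots<c_m$, then the inner sum factors over the intervals $[d_{k-1},d_k]$ of $D$. The required identity then becomes a per-interval statement: for $x<y$ in $\Pt_{\bfn,\bfr}$,
\[
\sum_{x=c_0<\dots<c_m=y}(-1)^m\prod w_{\bfY^{-1}}(c_{k-1},c_k) = \pm\,\kappa(x,y)^{-1}\,w_{\bfY}(x,y),
\]
where $\kappa$ is multiplicative along chains and $\kappa(\hat0,\hat1)=K(\bfY)$. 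This is a weighted analogue of Philip Hall's theorem for the Möbius function, $\mu(x,y)=\sum(-1)^m$ over chains, combined with the Eulerian-type evaluation $\mu=\pm1$. This is the ``property of the Möbius function'' mentioned in the introduction.

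The third step is to reduce to a single component. The poset is a product, and the weights should factor over components, with $K$ already being a product over $i$. So it suffices to treat $\Pt_{n,r}$, provided the chain sum on a product can be handled; the standard way is to pass to multichains (the $q$-analogue of the product-of-chains argument). Within $\Pt_{n,r}$, we would use the tableau interpretation of Section~\ref{sec:tableau-interpretation}. A chain corresponds to a filling of a skew shape, $w$ becomes a product of $q$-binomial-type factors in $Y_{j}$, and the alternating sum becomes the $q$-analogue of inclusion–exclusion, $\sum_k(-1)^kq^{\binom k2}\binom{m}{k}_q=\delta_{m0}$. This sum collapses to a single monomial, producing the exponents $\binom{r}{2}$ for $Y_0$ (from the $r$ copies of $0$) and $r+j-1$ for $Y_j$.

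The main obstacle is the local identity in the second step, applied to the intricate definition of $W_C$. The weights are probably not literally multiplicative over gaps; they likely depend on the whole chain through Hall–Littlewood-type statistics. Our first attempt would be a sign-reversing involution on pairs (chain, refinement) that toggles the insertion of an element covering a minimal ``descent'', in the spirit of Sanyal's idea. We would verify the result first on $\Pt_{2,2}$ from Figure~\ref{fig:P22-Hasse} to calibrate the sign $(-1)^N$ and the handling of $\hat 1$.
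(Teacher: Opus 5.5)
\paragraph{Assessment.} Your Steps 1--2 match the paper's skeleton. You expand $X_c^{-1}/(1-X_c^{-1})=-(1+X_c/(1-X_c))$, regroup by the coarser chain $D$, factor over the gaps of $D$, and reduce to a weighted Philip Hall identity on each interval $(x,y)$. Here $\kappa(x,y)=\prod_{i,j}Y_{i,j}^{\Delta_j(x^{(i)},y^{(i)})}$ is multiplicative along chains because each $\Delta_j=s_j(y)-s_j(x)$ telescopes. The factorization is not conditional: Definition~\ref{def:W_poly} defines $W_C$ literally as $\prod_j w_{c_j,c_{j+1}}$, so your worry about dependence on the whole chain is unfounded. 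Also, $\hat 1$ is handled most cleanly via $\HLS=\HLS'/(1-X_{\hat 1})$.

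\paragraph{The gap.} The local identity you isolate is the entire content of the theorem, and you do not prove it. None of the tools you name reaches it:
\begin{itemize}
\item The $q$-binomial theorem settles only $n=0$, where $\Pt_{0,r}$ is a chain and the weights are $q$-binomials. For $n\ge 1$ the weights carry leg factors $1-Y_i^{\Delta_i(a,b)}$ on a non-chain poset, and you give no mechanism for them.
\item Your remark that the sum ``collapses to a single monomial'' holds only for $(\hat0,\hat1)$, where $w_{\hat0,\hat1}=1$. The localization needs every interval $(x,y)$, where the answer $\pm\kappa(x,y)^{-1}w_{x,y}(\bfY)$ is a genuine polynomial.
\item $\Pt_{\bfn,\bfr}$ is not Eulerian in general, so no Eulerian-type evaluation is available.
\item The involution is unspecified; it would have to cancel monomials inside non-monomial weights.
\item The product step is not automatic, since strict chains in $\Pt_{\bfn,\bfr}$ do not split into chains of the factors. ``Passing to multichains'' is only gestured at.
\end{itemize}

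\paragraph{The missing idea.} Read the weighted chain sum as a matrix inverse. Let $Z=(w_{a,b}(\bfY))_{a,b}$; it is unitriangular in any linear extension of $\letb$. Expanding $(I+U)^{-1}=\sum_k(-U)^k$ shows that $\sum_k(-1)^{k+1}\sum_C\prod_t w_{c_t,c_{t+1}}(\bfY)$, over chains $C$ in $(x,y)$, is the $(x,y)$ entry of $Z^{-1}$. Your local identity is therefore equivalent to $ZM=I$ for the explicit candidate
\[
m_{a,b}=(-1)^{\Delta_{n+1}(a,b)}\prod_{i=0}^{n}Y_i^{\Delta_i(a,b)}\,w_{a,b}(\bfY^{-1}),
\]
taken componentwise for $g>1$ (Lemma~\ref{t:refined_Z_and_M}). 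This is now linear algebra:
\begin{itemize}
\item The product case is immediate from $Z=\bigotimes_i Z^{(i)}$.
\item For $g=1$, induct on $n$ using the reverse-lexicographic order. This is allowed even though it is not a linear extension, because the inverse does not depend on the indexing order. With $\Lambda=\mathrm{diag}\bigl(Y_{n+1}^{s_{n+1}(a)}\bigr)_{a\in\Pt_{n,r}}$, one gets
\[
Z^+=\begin{pmatrix} Z & Z\\ Z-\Lambda^{-1}Z\Lambda & Z\end{pmatrix},\qquad
M^+=\begin{pmatrix}\Lambda^{-1}M\Lambda & -\Lambda^{-1}M\Lambda\\ M-\Lambda^{-1}M\Lambda & \Lambda^{-1}M\Lambda\end{pmatrix},
\]
so $Z^+M^+=I$ follows from $ZM=I$.
\item The base case $n=0$ is the $q$-Pascal matrix, where your $q$-binomial identity is exactly what is needed.
\end{itemize}
Without this or an equivalent argument, your proposal is a correct reduction but not a proof.
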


An outline of the rest of the paper is as follows: 
Section~\ref{sec:prelim} contains some preliminary definitions and notations.
Section~\ref{sec:HLS_def} presents detailed definitions of the main objects, the $\HLS$ and $\HLS'$ series, as well as their tableau interpretations.
It also states Theorem~\ref{t:skewModifiedHLS_reciprocity}, which is an $\HLS'$ version of Theorem~\ref{t:skewHLS_reciprocity}.
Section~\ref{sec:specializations} lists several specializations of the $\HLS$ series and its reciprocity property, which have appeared in various previous works and are now given a unified treatment. 
Section~\ref{sec:reciporcity-proof} contains a proof of Theorem~\ref{t:skewModifiedHLS_reciprocity}, using generalized zeta and M\"{o}bius matrices of a poset.
Finally, further comments and an open problem appear in Section~\ref{sec:further-discussion}.

\section{Preliminaries}
\label{sec:prelim}

Let $\NN$ be the set of positive integers, and let $\NN_0 = \{0\} \cup \NN$ be the set of non-negative integers.

\subsection{\texorpdfstring{$Y$}{Y}-analogs}

For $n,k \in \NN_0$ such that $k \le n$, and a variable $Y$, the \emph{$Y$\!-integer}, \emph{$Y$\!-factorial} and \emph{$Y$\!-binomial coefficient} are, respectively, 
\begin{align*}
    [n]_Y &\coloneqq \frac{1-Y^n}{1-Y}, &
    [n]_Y! &\coloneqq \prod_{i=1}^n [i]_Y, &
    & \text{and} &
    \binom{n}{k}_Y 
    &\coloneqq \frac{\left[n\right]_{Y}!}{\left[k\right]_{Y}!\left[n-k\right]_{Y}!}
    = \prod_{i=1}^{k}\frac{1-Y^{n-k+i}}{1-Y^{i}} .
\end{align*}
All of them are polynomials in $\NN_0[Y]$. 
Let $I$ be a finite multiset of integers in $[n]$, 
with elements $e_1 \le e_2 \le \dots \le e_\ell$.
Using $e_{\ell+1} \coloneqq n$, 
the corresponding \emph{$Y$\!-multinomial coefficient} is
\[
    \binom{n}{I}_Y \coloneqq \prod_{i=1}^{\ell} \binom{e_{i+1}}{e_{i}}_Y .
\]
Note that if $J$ is the underlying set of the multiset $I$, then $\binom{n}{I}_Y = \binom{n}{J}_Y$. We use the letter $Y$ instead of the more traditional $q$ since in applications, such as those in \cite{CSV/19,MaglioneVoll/24}, it is common for $q$ to denote a prime power and for the variables $\bfY$ and $\bfX$ to denote various (possibly negative) powers of $q$.

\subsection{Posets and chains}

Let $(P,\le)$ be a partially ordered set (poset).
For $a,b\in P$, we write $a<b$ if $a\le b$ and $a\ne b$. An \emph{open interval} $(a,b)$ is the set of all elements $c\in P$ such that $a<c<b$. 
A \emph{half-open interval} $(a,b]$ is $(a,b) \cup \{b\}$.

For any integer $k \ge 0$, a \emph{strict $k$-element chain ($k$-chain)} in a poset $P$ is an increasing sequence $c_1 < c_2 < \dots < c_k$ of elements of $P$. A \emph{non-strict $k$-element chain ($k$-multichain)} is 
a sequence $c_1 \le c_2 \le \dots \le c_k$.

\subsection{Partitions, diagrams and tableaux}

An (integer) \emph{partition} $\lambda$ is a non-increasing sequence $\lambda_1 \ge \lambda_2 \ge \dots \ge \lambda_k$ of positive integers, called the \emph{parts} of $\lambda$. 
The number $k \in \NN_0$ of parts is called the \emph{length} of $\lambda$, and for convenience we set $\lambda_i = 0$ for all $i > k$. The \emph{size} of $\lambda$ is $|\lambda| \coloneqq \lambda_1 + \dots + \lambda_k$.

To a partition $\lambda$ we associate a \emph{Young (or Ferrers) diagram}, which we denote by $[\lambda]$. It is a left-justified array of $|\lambda|$ identical square cells in the plane, arranged in $k$ rows, with $\lambda_i$ cells in the $i$-th row. According to the English convention, that we use throughout the paper, the first row (with $\lambda_1$ cells) is the top row. 
The partition $\lambda$ is the \emph{shape} of the diagram $[\lambda]$.

Let $\lambda_i'$ be the number of parts $\lambda_j$ such that $\lambda_j \ge i$. The \emph{conjugate partition} of $\lambda$, denoted by $\lambda'$, is the sequence $\lambda_1' \ge \lambda_2' \ge \dots \ge \lambda_{\lambda_1}'$. Equivalently, it is the partition whose diagram is the transpose of the diagram $[\lambda]$, obtained by reflecting along the main diagonal.

A \emph{semistandard Young tableau} of shape $\lambda$ is a filling of the cells of the Young diagram $[\lambda]$ with positive integers which are non-decreasing along rows (from left to right) and strictly increasing down columns.

A \emph{skew shape} is a pair $(\lambda,\mu)$ of partitions such that 
$\mu_i \le \lambda_i$ for all $i \ge 1$;
equivalently, the diagram $[\mu]$ is contained in the diagram $[\lambda]$.
This skew shape is denoted by $\lambda/\mu$.
The corresponding 
\emph{skew diagram} $[\lambda/\mu]$ is 
the set-theoretic difference of the diagrams $[\lambda]$ and $[\mu]$. 
A \emph{semistandard Young tableau} of skew shape $\lambda/\mu$ is a filling of the cells of the Young diagram $[\lambda]$ with integers such that
the cells of the sub-diagram $[\mu]$ are filled with zeros, and the cells of $[\lambda/\mu]$ are filled with positive integers which are non-decreasing along rows (from left to right) and strictly increasing down columns.
For a tableau $T$, denote by $T_{i,j}$ the entry in row $i$ and column $j$.

\begin{remark}\label{rem:zeros_in_SSYT}
    Our definition of semistandard tableaux of skew shape $\lambda/\mu$ is slightly non-standard, since we fill the cells of $[\mu]$ with zeros, thus recording the partition $\mu$ and not only the set-theoretic difference of the diagrams $[\lambda]$ and $[\mu]$. 
    This will be used in Definition~\ref{def:ThetaSkewTableau}.
    For example,
    if $\mu = \lambda$ we get, for distinct partitions $\lambda$, empty skew shapes $\lambda/\mu$ which yield distinct tableaux. 
\end{remark}

\section{Main definitions and their tableau interpretations}%
\label{sec:HLS_def}

In this section we introduce the $\HLS_{\bfn, \bfr}$ and the related $\HLS'_{\bfn, \bfr}$ series. Their definitions are formulated, in Subsections~\ref{sec:main-definition} and~\ref{sec:alternative-formulations}, in terms of chains in the poset $\Pt_{\bfn, \bfr}$. Later, in Subsection~\ref{sec:tableau-interpretation}, we give an equivalent formulation in terms of semistandard Young tableaux, which is closer in spirit to the notions introduced in \cite{MaglioneVoll/24}.

Subsection~\ref{sec:alternative-formulations} also contains Theorem~\ref{t:skewModifiedHLS_reciprocity}, which is a restatement of the main result, Theorem~\ref{t:skewHLS_reciprocity}, in terms of the $\HLS'_{\bfn, \bfr}$ series, and is the version to be proved in Section~\ref{sec:reciporcity-proof}.

\subsection{Main definition}
\label{sec:main-definition}

This subsection is dedicated to the definition of the skew Hall--Littlewood--Schubert series $\HLS_{\bfn, \bfr}$ (Definition~\ref{def:skewHLS}).
It is preceded by definitions of several auxiliary notations, serving to streamline the statement and proof of the main result, Theorem~\ref{t:skewHLS_reciprocity}.

\begin{defn}\label{def:s_and_Delta}
    For $a = (a_0, a_1, \dots, a_n) \in \Pt_{n,r}$ define 
    \begin{align*}
        s_0(a) \coloneqq \binom{a_0}{2} 
        \qquad \text{and} \qquad
        s_i(a) \coloneqq \sum_{k=0}^{i-1} a_k
        \quad (1 \le i \le n+1). 
    \end{align*}
    For $a, b \in \Pt_{n,r}$ define:
    \begin{align*}
        \Delta_i(a,b) &\coloneqq s_i(b) - s_i(a)
        \qquad (0 \le i \le n+1). 
    \end{align*}
\end{defn}

Thus $s_{n+1}(a)$ is the cardinality of $a$ (interpreted as a multiset) and, by definition,
\[
    a \letb b \iff
    (\forall\, 1 \le i \le n+1)\quad \Delta_i(a,b) \ge 0 .
\]

\begin{defn}
    Let $a, b \in \Pt_{n,r}$. For a variable $Y_0$, define
    \[
        \theta_{a,b}(Y_0) 
        \coloneqq \binom{b_0}{a_0}_{Y_0} ,
    \]
    where $a$ and $b$ contain $a_0$ and $b_0$ zeros, respectively.
\end{defn}

Note that $\theta_{a,b}(Y_0) = 0$ if $a_0 > b_0$, and that $\theta_{\varnothing,a}(Y_0) = \theta_{a,a}(Y_0) = 1$ while $\theta_{a,E_{n,r}}(Y_0) = \binom{r}{a_0}_{Y_0}$,
for every $a \in \Pt_{n,r}$.

\begin{defn}%
\label{def:SkewLegPairPoly}
    Set $n$ variables $\bfY_{>0} = (Y_1, \dots, Y_n)$.
    For $a, b \in \Pt_{n,r}$, define
    \[
        \Lplus_{a,b}
        \coloneqq \{i \in [n] \suchthat a_i = 1 \text{ and } b_i = 0\} .
    \]
    The \emph{refined leg polynomial} of the pair $(a,b)$ is
    \[
        \phi_{a,b}(\bfY_{>0})
        \coloneqq \begin{cases}
        \prod_{i \in \Lplus_{a,b}} \! \left( 1-Y_i^{\Delta_i(a,b)} \right)\!, & \text{if } a \letb b \:\!; \\
        0, & \text{otherwise.}
        \end{cases}
    \]
\end{defn}

\begin{observation}
    Assume that $a, b \in \Pt_{n,r}$ and $a \letb b$.
    \begin{enumerate}[label=(\alph*)]
        \item 
        If $\Lplus_{a,b} = \varnothing$, then $\phi_{a,b}(\bfY_{>0}) = 1$. This happens if and only if $a_i \le b_i$ for all $0 \le i \le n$; equivalently, $a$ is contained in $b$ in the multiset interpretation. This is similar to \cite[Remark~6.2]{MaglioneVoll/24}.
        \item 
        If $i \in \Lplus_{a,b}$, then $\Delta_i(a,b) > 0$.
        Therefore, if $\Lplus_{a,b} \ne \varnothing$, then $\phi_{a,b}(\bfY_{>0}) \ne 0, 1$.
    \end{enumerate}
\end{observation}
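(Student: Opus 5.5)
The plan is to unwind the definitions directly, using only that $a \letb b$ means $\Delta_i(a,b) \ge 0$ for all $1 \le i \le n+1$, and that $a_i, b_i \in \{0,1\}$ for $i \ge 1$. I would prove part~(b) first, since its inequality also gives the converse direction of the ``if and only if'' in part~(a).

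For part~(b), let $i \in \Lplus_{a,b}$, so $i \ge 1$, $a_i = 1$ and $b_i = 0$. The relation $a \letb b$ at index $i$ gives $\sum_{k=0}^{i} b_k \ge \sum_{k=0}^{i} a_k$. Substituting $b_i = 0$ and $a_i = 1$ turns this into $\sum_{k=0}^{i-1} b_k \ge \sum_{k=0}^{i-1} a_k + 1$, that is, $\Delta_i(a,b) = s_i(b) - s_i(a) \ge 1$. Hence each factor $1 - Y_i^{\Delta_i(a,b)}$ is a nonzero polynomial of positive degree in $Y_i$. A product of such factors in $\ZZ[\bfY_{>0}]$ is nonzero, because the ring is an integral domain, and it has positive total degree. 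So if $\Lplus_{a,b} \neq \varnothing$, then $\phi_{a,b} \ne 0, 1$.

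For part~(a), if $\Lplus_{a,b} = \varnothing$ then $\phi_{a,b}$ is an empty product, so $\phi_{a,b} = 1$. Conversely, if $\Lplus_{a,b} \ne \varnothing$ then $\phi_{a,b} \ne 1$ by part~(b). So $\phi_{a,b} = 1$ exactly when $\Lplus_{a,b} = \varnothing$. It remains to compare $\Lplus_{a,b} = \varnothing$ with the coordinatewise condition $a_i \le b_i$ for all $0 \le i \le n$:
\begin{itemize}
\item If $a_i \le b_i$ for all $i$, no index $i \ge 1$ has $a_i = 1$ and $b_i = 0$, so $\Lplus_{a,b} = \varnothing$.
\item Conversely, if $\Lplus_{a,b} = \varnothing$, then for $1 \le i \le n$ the $\{0,1\}$-valued entries satisfy $a_i \le b_i$. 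The remaining index $i = 0$ is not controlled by $\Lplus_{a,b}$; there $a \letb b$ at index $0$ gives $a_0 \le b_0$ directly.
\end{itemize}
The equivalence with containment of multisets is immediate, since containment of multisets means exactly coordinatewise inequality of the characteristic vectors.

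There is no real obstacle here. The only points needing care are the index $0$ in part~(a), which must come from the tableau order rather than from $\Lplus_{a,b}$, and the use of the $\{0,1\}$-valuedness of $a_i, b_i$ for $i \ge 1$.
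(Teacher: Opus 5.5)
Your proof is correct. The paper states this observation without proof, as an immediate consequence of the definitions, and your argument is exactly that direct verification. You use the partial-sum inequality at index $i$ to get $\Delta_i(a,b)\ge 1$ for $i\in\Lplus_{a,b}$, and $a\letb b$ at index $0$ to get $a_0\le b_0$. Deriving part~(b) first also settles the converse $\phi_{a,b}=1\Rightarrow \Lplus_{a,b}=\varnothing$, so your argument covers both readings of ``this happens''.
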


In particular,
    $\phi_{\varnothing,a}(\bfY_{>0}) 
    = \phi_{a,a}(\bfY_{>0}) 
    = \phi_{a,E_{n,r}}(\bfY_{>0}) 
    = 1$,
for every $a \in \Pt_{n,r}$.

\begin{example}
    $\phi_{247, 1456}(\bfY_{>0}) = (1 - Y_2)(1 - Y_7^2)$, $\phi_{247, 0^31456}(\bfY_{>0}) = (1 - Y_2^4)(1 - Y_7^5)$, $\phi_{146, 1456}(\bfY_{>0}) = 1$, and $\phi_{237, 0456}(\bfY_{>0}) = 0$.
\end{example}

Fix a pair of $g$-tuples $\bfn, \bfr \in \NN_0^g$, say $\bfn = (n_1, \dots, n_g)$ and $\bfr = (r_1, \dots, r_g)$, and recall from~\eqref{eq:Pbfnbfr} the poset
\[
    \Pt_{\bfn, \bfr} = \Pt_{n_1, r_1} \times \dots \times \Pt_{n_g, r_g}.
\]

\begin{defn}\label{def:w_ab_pair}
    Let $a, b \in \Pt_{\bfn, \bfr}$. Suppose $a = (a^{(1)}, \dots, a^{(g)})$ and $b = (b^{(1)}, \dots, b^{(g)})$ where $a^{(i)}, b^{(i)} \in \Pt_{n_i, r_i}$ for each $i \in [g]$. 
    Introduce variables $\bfY = (Y_{i, j})$, where $i \in [g]$ and $0\le j \le n_i$.
    For convenience, set $\bfY_i = (Y_{i, j})_{j \in [n_i]}$ for every $i \in [g]$.
    Define 
    \[
        w_{a,b}(\bfY) \coloneqq
        \prod_{i=1}^{g} \theta_{a^{(i)}, b^{(i)}}(Y_{i,0}) \phi_{a^{(i)}, b^{(i)}}(\bfY_i) .
    \]
\end{defn}

Note that $w_{\hat{0},a}(\bfY) = w_{a,a}(\bfY) = 1$
for every $a \in \Pt_{\bfn,\bfr}$.

\begin{defn}\label{def:W_poly}
    Let $C$ be any $k$-multichain $\hat{0} \letb c_1 \letb \dots \letb c_k \letb \hat{1}$ in 
    $\Pt_{\bfn, \bfr}$. 
    Denote also $c_0 \coloneqq \hat{0}$ and $c_{k+1} \coloneqq \hat{1}$. Define
    \[
        W_C(\bfY)
        = \prod_{j = 0}^{k} w_{c_j, c_{j+1}}(\bfY).
    \]
\end{defn}

We now define the main object of this paper,
using strict chains.
An alternative formulation using multichains will be given in Subsection~\ref{sec:alternative-formulations}.

Introduce variables~$\bfX = (X_c)$, for $c \in \Pt_{\bfn, \bfr}$.

\begin{defn}\label{def:skewHLS}
    The \emph{skew Hall--Littlewood--Schubert series} for $(\bfn, \bfr)$ is
    \[
    \HLS_{\bfn, \bfr}(\bfY; \bfX)
        \coloneqq
        \sum_{C} W_C(\bfY) \prod_{c \in C} \frac{X_c}{1-X_c} 
        \in \ZZ[\bfY](\bfX),
    \]
    where 
    the summation is over all the strict chains $C$ in the half-open interval $(\hat{0},\hat{1}] \subset \Pt_{\bfn, \bfr}$.
\end{defn}

\begin{example}
    Let $g=1$, $\bfn = (1)$ and $\bfr = (2)$. Use the simplified notations $Y_0 \coloneqq Y_{1,0}$ and $Y_1 \coloneqq Y_{1,1}$. With the aid of SageMath~\cite{SageMath/25}, one can compute that $\HLS_{(1), (2)}(\bfY; \bfX) = \Num / \Den$, where
    \begin{align*}
        \Num 
        &= Y_{0} Y_{1}^{2} X_{0^{2}} X_{01} X_{0} X_{1} + Y_{1}^{2} X_{0^{2}} X_{01} X_{1} + Y_{1}^{2} X_{0^{2}} X_{0} X_{1} - Y_{0} Y_{1} X_{0^{2}} X_{01} 
        - Y_{0} Y_{1} X_{0} X_{1} 
        &  \\
        &\quad 
        - Y_{1}^{2} X_{0^{2}} X_{1}
        - Y_{0} X_{01} X_{0} 
        - Y_{1} X_{0^{2}} X_{01} 
        - Y_{1} X_{0} X_{1} 
        + Y_{0} X_{01} 
        + Y_{0} X_{0} + 1
    \end{align*}
    and 
    \begin{align*}
        \Den
        &= (1 - X_{1}) (1 - X_{0}) (1 - X_{01}) (1 - X_{0^{2}}) (1 - X_{0^{2}1}) .
    \end{align*}
\end{example}

\begin{remark}
    The number of terms in the numerator (and denominator) of $\HLS_{\bfn, \bfr}$ grows rapidly with $\bfn$ and $\bfr$. For example, if $\bfn = (2)$ and $\bfr = (2)$, then $\HLS_{(2), (2)}(\bfY; \bfX)$ can be presented as $\Num / \Den$, where $\Num$ is an irreducible polynomial with $1{,}412$ terms and $\Den$ is the product $\prod_{\varnothing \ne a \in \Pt_{2,2}} (1 - X_c)$ with $11$ factors.
\end{remark}

\subsection{Two alternative formulations}
\label{sec:alternative-formulations}

Definition~\ref{def:skewHLS} can be restated in a very natural form, using summation over the (infinitely many) \emph{multichains} in $(\hat{0},\hat{1}]$.

\begin{defn}
\label{def:tcol-relation}
Let $C$ be a multichain in $\Pt_{\bfn, \bfr}$.
\begin{enumerate}[label=(\alph*)]
    \item
    For an element $c \in \Pt_{\bfn, \bfr}$, let $m_c(C)$ be the number of times $c$ appears in $C$.
    \item
    Let $\Mult(C) \coloneqq (m_c(C))_{c}$ be the \emph{multiplicity vector} of $C$.
    \item
    Let $\supp(C) \coloneqq \{c \suchthat m_c(C) \ne 0\}$.
    \item
    For a pair of multichains $C$ and $C'$, write $C \tcol C'$ if $\supp(C) = \supp(C')$.
\end{enumerate}
\end{defn}

The relation $\tcol$ is an equivalence relation on multichains. %
Clearly $C \tcol C'$ implies $W_C(\bfY) = W_{C'}(\bfY)$. Using the notation $\bfX^{\Mult(C)} \coloneqq \prod_c X_c^{m_c(C)}$, the $\HLS_{\bfn, \bfr}$ series can be presented as the formal power series
\begin{equation}
    \HLS_{\bfn, \bfr}(\bfY; \bfX)
        = \sum_{C} W_C(\bfY) \bfX^{\Mult(C)} ,\label{eq:skewHLS-gf}
\end{equation}
where the summation is over all the multichains $C$ in the half-open interval $(\hat{0},\hat{1}] \subset \Pt_{\bfn, \bfr}$.

We now
introduce a slight modification of 
$\HLS_{\bfn, \bfr}$,
which will be useful in the proof of the reciprocity property.

\begin{defn}\label{def:modified-skewHLS}
Let $\bfn, \bfr \in \NN_0^g$ be two $g$-tuples.
The \emph{modified skew Hall--Littlewood--Schubert series} for $(\bfn, \bfr)$ is
    \[
        \HLS'_{\bfn, \bfr}(\bfY; \bfX)
        \coloneqq
        \sum_{C} W_C(\bfY) \prod_{c \in C} \frac{X_c}{1-X_c} 
        \in \ZZ[\bfY](\bfX),
    \]
    where
    $W_C(\bfY)$ is as in Definition~\ref{def:W_poly}
    and the summation is over all the strict chains $C$ in the open interval $(\hat{0},\hat{1}) \subset \Pt_{\bfn, \bfr}$.
\end{defn}

\begin{remark}
    In Definition~\ref{def:modified-skewHLS} the summation is over chains in the \emph{open} interval $(\hat{0},\hat{1})$, whereas in Definition~\ref{def:skewHLS} it is over chains in the \emph{half-open} interval $(\hat{0},\hat{1}]$. This is the only difference between the two definitions.
\end{remark}

Note that every chain $C$ in $(\hat{0},\hat{1}]$ is either a chain in $(\hat{0},\hat{1})$, or is the chain $C'$ obtained from some chain $C$ in $(\hat{0},\hat{1})$ by appending the element $\hat{1}$. 
In the latter case, by Definition~\ref{def:W_poly}, $W_C(\bfY) = W_{C'}(\bfY)$.
It follows that
\[
    \HLS_{\bfn, \bfr}(\bfY; \bfX) 
    = \left( 1 + \frac{X_{\hat{1}}}{1 - X_{\hat{1}}} \right) \cdot \HLS'_{\bfn, \bfr}(\bfY; \bfX)  
    = \frac{1}{1 - X_{\hat{1}}} \cdot \HLS'_{\bfn, \bfr}(\bfY; \bfX) .
\]
Theorem~\ref{t:skewHLS_reciprocity} is therefore equivalent to the following modified reciprocity property.

\begin{theorem}[$\HLS'$ reciprocity]\label{t:skewModifiedHLS_reciprocity} 
For any $\bfn, \bfr \in \NN_0^g$,
\[
    \HLS'_{\bfn, \bfr}(\bfY^{-1}; \bfX^{-1})
    = (-1)^{N-1}
    K(\bfY)^{-1}
    \cdot \HLS'_{\bfn, \bfr}(\bfY; \bfX) ,
\]
where $N=\sum_{i=1}^{g}(n_{i} + r_{i})$ and
\[
    K(\bfY)
    = %
    \prod_{i=1}^{g} \Bigg( Y_{i, 0}^{\binom{r_{i}}{2}} \prod_{j=1}^{n_i} Y_{i, j}^{r_i + j - 1} \Bigg).
\]
\end{theorem}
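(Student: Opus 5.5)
The plan is to proceed through generalized zeta and Möbius matrices of the poset $\Pt_{\bfn,\bfr}$, abstracting the classical fact that chain sums of the form $\sum_C \prod x_c$ transform under $x \mapsto -1-x$ via the Möbius function. Write $x_c \coloneqq X_c/(1-X_c)$. Under $X_c \mapsto X_c^{-1}$ we get $x_c \mapsto -1-x_c$. Hence
\[
    \HLS'_{\bfn,\bfr}(\bfY^{-1};\bfX^{-1}) = \sum_{C} W_C(\bfY^{-1}) \prod_{c\in C}(-1-x_c),
\]
summed over strict chains $C$ in $(\hat 0,\hat 1)$. Expanding the product and regrouping by the subchain $D = \{d_1<\dots<d_m\} \subseteq C$ that carries the factors $x_d$ gives
\[
    \HLS'_{\bfn,\bfr}(\bfY^{-1};\bfX^{-1}) = \sum_{D} \prod_{j=0}^{m} \tilde\mu(d_j,d_{j+1}) \prod_{d\in D} x_d, \qquad d_0=\hat 0,\ d_{m+1}=\hat 1 .
\]
Here $\tilde\mu(a,b) \coloneqq \sum (-1)^{k} \prod_{i} w_{c_i,c_{i+1}}(\bfY^{-1})$, summed over strict chains $a=c_0<c_1<\dots<c_k=b$, with one sign absorbed per step. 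This regrouping is legitimate because $W_C$ is multiplicative along consecutive pairs (Definition~\ref{def:W_poly}). In matrix language, $\tilde\mu$ is the inverse of the unitriangular ``zeta matrix'' $Z_{a,b} = w_{a,b}(\bfY^{-1})$ for $a \letb b$, using $w_{a,a}=1$. The theorem therefore reduces to a closed formula for $Z^{-1}$ of the shape
\[
    \tilde\mu(a,b) = (-1)^{\rho(b)-\rho(a)}\, \kappa(a,b)^{-1}\, w_{a,b}(\bfY),
\]
where $\rho$ is a rank function with $\rho(\hat 1)-\rho(\hat 0)=N$ and $\kappa$ is a monomial that telescopes along chains, $\kappa(a,b)\kappa(b,c)=\kappa(a,c)$, with $\kappa(\hat 0,\hat 1)=K(\bfY)$. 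Substituting this formula back, the signs multiply to $(-1)^{N-m-1}$, while the terms $(-1)^{m}$ coming from the $x_d$ bookkeeping must combine to the global sign $(-1)^{N-1}$. I will check this bookkeeping carefully, since the sign convention for $\tilde\mu$ has to be fixed consistently. Once it is, the sum reassembles into $(-1)^{N-1}K(\bfY)^{-1}\HLS'_{\bfn,\bfr}(\bfY;\bfX)$.

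Since $\Pt_{\bfn,\bfr}$ is a product poset and $w$ factors over the components (Definition~\ref{def:w_ab_pair}), the inverse matrix factors as well. It therefore suffices to treat a single $\Pt_{n,r}$. There I would first record the inversion behaviour of each factor. We have $\binom{b_0}{a_0}_{Y^{-1}} = Y^{-a_0(b_0-a_0)}\binom{b_0}{a_0}_Y$, and $1-Y_i^{-\Delta_i} = -Y_i^{-\Delta_i}(1-Y_i^{\Delta_i})$. Consequently $w_{a,b}(\bfY^{-1}) = (-1)^{|\Lplus_{a,b}|}\,\mathrm{mon}(a,b)^{-1}\, w_{a,b}(\bfY)$ for an explicit monomial $\mathrm{mon}(a,b)$.

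Using this, the required identity $\sum_{a\letb c\letb b} Z_{a,c}\,\tilde\mu(c,b) = \delta_{a,b}$ becomes a signed, monomially weighted convolution of the $w$'s. The natural candidate is $\kappa(a,b) = Y_0^{s_0(b)-s_0(a)}\prod_{j\ge1}Y_j^{\Delta_j(a,b)}$, that is, $Y_0^{\binom{b_0}{2}-\binom{a_0}{2}}\prod_j Y_j^{s_j(b)-s_j(a)}$. For $(a,b)=(\varnothing,E_{n,r})$ this gives exactly $Y_0^{\binom r2}\prod_j Y_j^{r+j-1}$, which matches $K$, and it telescopes automatically. For the rank function I would take the number of cover steps, which is $N$ between $\hat 0$ and $\hat 1$; this agrees with $n+r$ as computed from Figure~\ref{fig:P22-Hasse}.

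The main obstacle is verifying this local inversion identity on $\Pt_{n,r}$. I would split off the zero coordinate and attack the remainder by induction on $n$. For the $a_0$-coordinate the identity reduces to the Gaussian inversion formula $\sum_k (-1)^k Y^{\binom k2}\binom{m}{k}_Y = \delta_{m,0}$, using $\binom{b_0}{2}-\binom{a_0}{2}-a_0(b_0-a_0)=\binom{b_0-a_0}{2}$. For the coordinates $1,\dots,n$ I would induct by removing the last coordinate $n$ and comparing $\Pt_{n,r}$ with $\Pt_{n-1,r}$. The difficulty is that $\letb$ couples coordinates through partial sums, so a middle element $c$ can change $\Delta_i$ for later $i$. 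I expect to handle this by summing over $c_n\in\{0,1\}$ with the first $n-1$ coordinates fixed, and showing that the factors $(1-Y_n^{\Delta_n})$, together with the monomials $Y_n^{\Delta_n}$, cancel pairwise except when the induction hypothesis applies.
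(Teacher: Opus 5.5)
Your architecture is the paper's: substitute $x_c\mapsto -1-x_c$, regroup by the coarser chain $D$, identify each segment sum with an entry of the inverse of the matrix $(w_{a,b})$, factor over $g$ (the paper uses Kronecker products), and prove the $g=1$ inverse by induction on $n$ from the $q$-Pascal base case. The gap is in the key closed formula for $\tilde\mu$, whose sign is wrong.

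\textbf{The sign in $\tilde\mu$.} The poset $\Pt_{n,r}$ does not have height $N$. Its rank function is $\sum_{i=1}^{n+1}s_i(a)$, so $\hat 0$ and $\hat 1$ are $(n+1)r+\binom{n+1}{2}$ cover steps apart: nine in Figure~\ref{fig:P22-Hasse}, not $n+r=4$. A concrete check: $\Pt_{2,0}$ is the chain $\varnothing\lttb 2\lttb 1\lttb 12$, with $w_{2,1}=1-Y_2$ and all other $w$'s equal to $1$. Inverting directly gives $\tilde\mu(\varnothing,1)=-Y_2^{-1}$ and $\tilde\mu(\varnothing,12)=Y_2^{-1}$. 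Your rank-parity formula predicts $+Y_2^{-1}$ and $-Y_2^{-1}$, so the lemma you planned to prove by induction is false.

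The correct sign is $(-1)^{|b|-|a|}=(-1)^{\Delta_{n+1}(a,b)}$, the parity of the change in \emph{cardinality}. This is not a rank function ($2\lttb 1$ is a cover between sets of equal size), and it agrees with rank parity in the Igusa case $n=0$ but not in general. It does telescope and totals $n+r$ between $\hat 0$ and $\hat 1$, which is all your argument needs. With your $\kappa$ (which is correct), the statement to prove is $\tilde\mu(a,b)=(-1)^{\Delta_{n+1}(a,b)}\kappa(a,b)^{-1}w_{a,b}(\bfY)$. This is exactly Lemma~\ref{t:refined_Z_and_M} after $\bfY\mapsto\bfY^{-1}$.

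\textbf{The global sign.} Your regrouped display drops a factor $-1$. We have $\prod_{c\in C}(-1-x_c)=(-1)^{|C|}\sum_{D\subseteq C}\prod_{d\in D}x_d$, and a chain with $|C|$ interior elements has $|C|+1$ steps. Hence $\HLS'_{\bfn,\bfr}(\bfY^{-1};\bfX^{-1})=-\sum_D\prod_{j=0}^{m}\tilde\mu(d_j,d_{j+1})\prod_{d\in D}x_d$, with no leftover $(-1)^m$. With the corrected $\tilde\mu$, the product telescopes to $(-1)^N K(\bfY)^{-1}\prod_j w_{d_j,d_{j+1}}(\bfY)$. The leading minus sign is exactly what produces the exponent $N-1$.

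\textbf{The inductive step.} With both corrections your plan coincides with the paper's proof. Your worry that a middle element changes $\Delta_i$ for later $i$ does not arise when you strip the last coordinate, since $a_n$ enters only $s_{n+1}(a)=|a|$. The paper exploits this by ordering reverse-lexicographically. Writing $a^+=a\cup\{n+1\}$, it uses $\phi_{a,b^+}=\phi_{a^+,b^+}=\phi_{a,b}$ and $\phi_{a^+,b}=(1-Y_{n+1}^{\Delta_{n+1}(a,b)})\phi_{a,b}$, the latter valid even when $\Delta_{n+1}(a,b)=0$. It then writes $Z^+$ and $M^+$ as $2\times 2$ block matrices in $Z$, $M$ and a diagonal conjugation $D^{-1}(\cdot)D$. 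This is a clean way to organize your ``sum over $c_n\in\{0,1\}$.''
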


The proof of this theorem appears in Section~\ref{sec:reciporcity-proof}.

\subsection{Tableau interpretation}
\label{sec:tableau-interpretation}

The definition of the $\HLS$ series by Maglione and Voll~\cite[Definition~1.2]{MaglioneVoll/24} uses semistandard Young tableaux (of regular shapes). 
In this subsection, we give a similar interpretation of the skew $\HLS$ series, $\HLS_{\bfn, \bfr}$, using semistandard Young tableaux of \emph{skew} shapes.
To this end, we provide tableau interpretations of the partial order $\letb$ and of the polynomials $\theta$, $\phi$, and $w$ from Section~\ref{sec:main-definition}.

Recall Definition~\ref{def:Pnr_intro}, where the poset $(\Pt_{n,r}, \letb)$ was introduced. 
For $r = 0$, this order is equivalent to the order used by Gale~\cite{Gale/68}.

\begin{remark}
    The inequalities defining $\letb$ are the same as those defining the dominance (or majorization) order on the partitions of a fixed number $n$. 
    The two orders are different, though, since the underlying sets are obviously different.
\end{remark}

We now interpret $\letb$ in terms of tableaux.
    
\begin{observation}
    Let $a, b \in \Pt_{n,r}$. Then $a \letb b$ if and only if the two-column tableau, composed of $a$ as the right column and of $b$ as the left column, is semistandard. 
\end{observation}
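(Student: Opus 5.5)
The plan is to translate both sides of the equivalence into statements about sorted lists of entries, and then check that the two resulting conditions coincide.

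\textbf{Setting up the tableau.} Write the multisets in non-decreasing order, $b = (\beta_1 \le \dots \le \beta_{|b|})$ and $a = (\alpha_1 \le \dots \le \alpha_{|a|})$, where $|a| = s_{n+1}(a)$. The two-column tableau has left column $\beta_1,\dots,\beta_{|b|}$ read top to bottom, and right column $\alpha_1,\dots,\alpha_{|a|}$. Under the skew convention of Remark~\ref{rem:zeros_in_SSYT}, the zeros are the cells of the inner shape $\mu$, and the positive entries fill $\lambda/\mu$.

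\textbf{Reducing semistandardness to inequalities.} Strict increase of the positive entries down each column is automatic, since each $i \in [n]$ occurs at most once in $a$ and in $b$. The zeros occupy initial segments of both columns. Hence the tableau is semistandard exactly when the following three conditions hold.
\begin{enumerate}[label=(\roman*)]
    \item $\lambda$ is a partition, i.e.\ $|a| \le |b|$.
    \item $\mu$ is a partition, i.e.\ $a_0 \le b_0$.
    \item The rows are weakly increasing, i.e.\ $\beta_k \le \alpha_k$ for all $k \le |a|$.
\end{enumerate}
Condition (ii) follows from (i) and (iii): if $a_0 \ge 1$, then $\alpha_{a_0} = 0$, so by (iii) $\beta_1, \dots, \beta_{a_0}$ are all $\le 0$, which gives $b_0 \ge a_0$. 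So semistandardness is equivalent to (i) together with (iii).

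\textbf{Matching with the order.} For $0 \le i \le n$ set $N_a(i) = \#\{k : \alpha_k \le i\} = s_{i+1}(a)$, and define $N_b(i)$ similarly. By Definition~\ref{def:s_and_Delta}, $a \letb b$ is equivalent to $N_a(i) \le N_b(i)$ for all $0 \le i \le n$. The case $i = n$ of this inequality is exactly condition (i). It remains to show that, given (i), the counting inequalities are equivalent to (iii). This is the standard equivalence between comparing order statistics and comparing counting functions.

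\begin{itemize}
    \item[($\Rightarrow$)] Suppose $N_a \le N_b$ pointwise, and fix $k \le |a|$ with $x = \alpha_k$. Then $N_b(x) \ge N_a(x) \ge k$, so at least $k$ entries of $b$ are $\le x$. In particular $\beta_k \le x = \alpha_k$.
    \item[($\Leftarrow$)] Suppose (iii) holds, and fix $i$ with $k = N_a(i)$. If $k = 0$ there is nothing to prove. Otherwise $\alpha_k \le i$, so $\beta_1 \le \dots \le \beta_k \le \alpha_k \le i$, and therefore $N_b(i) \ge k$.
\end{itemize}

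Combining the three steps proves the Observation. The only step that needs care is the bookkeeping of zeros: one must check that repeated zeros do not violate column strictness (they lie in $[\mu]$, where strictness is not required) and that the $\mu$-shape condition (ii) is implied by the others, as shown above.
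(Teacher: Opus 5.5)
Your proof is correct and complete. The paper states this as an Observation without proof. Your argument is exactly the routine verification it leaves to the reader: semistandardness reduces to $|a|\le|b|$ plus the row comparisons $\beta_k \le \alpha_k$, with column strictness automatic and $a_0 \le b_0$ implied, and these match the prefix-count inequalities $s_{i+1}(a) \le s_{i+1}(b)$ that define $\letb$.
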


\begin{example}\label{exa:tableau-simple}
The following tableau
\ytableausetup{boxsize=1.3em}
\smallskip{}
    \[
    \begin{ytableau}
        0 & 0 & 0 & 0 & 0 & 2 & 2 \\
        0 & 0 & 0 & 1 & 3 & 5 \\
        2 & 5 \\
        3
    \end{ytableau}
    \]
is semistandard, of skew shape $(7,6,2,1) / (5,3)$; see Remark~\ref{rem:zeros_in_SSYT}. Its columns correspond to a strict $7$-element chain in $P_{5,2}$:
\[
    2 \lttb 25 \lttb 03 \lttb 01 \lttb 0^2 \lttb 0^2 5 \lttb 0^2 23 .
\]
\end{example}

We now interpret the polynomials $\theta$, $\phi$, and $w$ in terms of tableaux.
    
Let $\SSYT_{n,r}$ be the set of all semistandard Young tableaux $T = (T_{i,j})$ of some skew shape $\lambda / \mu$, 
such that the diagram $[\mu]$ has at most $r$ rows and is filled with zeros,
while the rest of the diagram $[\lambda]$ has entries from $[n]$; see Remark~\ref{rem:zeros_in_SSYT}.
We use the English convention; thus the entries of each $T \in \SSYT_{n,r}$ are non-decreasing along rows (from left to right) and down columns, with the positive entries of each column strictly increasing.
The columns of $T$ may be viewed as (not necessarily distinct) elements of $\Pt_{n,r}$.
The \emph{leg} of the cell at position $(i,j)$ is the multiset $\Leg_T(i,j) \coloneqq \{T_{k,j} \suchthat k\ge i\}$.
Informally, it is the sub-multiset of the $j$-th column of $T$ obtained by removing the smallest $i-1$ elements.

\begin{defn}\label{def:ThetaSkewTableau}
    Let $T\in \SSYT_{n,r}$ be a tableau of skew shape $\lambda / \mu$ with columns $c_{1},\dots,c_{\ell}$, listed from right to left.
    Suppose that $c_i$ contains $e_i$ zeros, i.e., $\mu$ has the conjugate integer partition $\mu'$ with parts $e_\ell \ge \dots \ge e_1$. For a variable $Y_0$, set
    \[
        \Theta_T(Y_0) 
        \coloneqq \prod_{i=1}^{\ell} \binom{e_{i+1}}{e_{i}}_{Y_0} 
        \in \ZZ[Y_0] ,
    \]
    a product of $Y_0$-binomial coefficients, where $e_{\ell+1} \coloneqq r$.
\end{defn}

In other words, the polynomial $\Theta_T(Y_0)$ is the $Y_0$-multinomial coefficient $\binom{r}{\mu'}_{Y_0}$. Note that if $r\le 1$, then $\Theta_T(Y_0) = 1$ for all $T \in \SSYT_{n,r}$.

\begin{defn}\label{def:SkewLegPoly}
    Set $n$ variables $\bfY_{>0} = (Y_1, \dots, Y_n)$, and let $T\in \SSYT_{n,r}$ be a semistandard Young tableau. 
    The \emph{refined leg polynomial} of $T$ is
    \[
        \Phi_{T}(\bfY_{>0})
        = \prod_{(i,j) \in \LTplus} \! \left( 1-Y_{T_{i, j+1}}^{\#\Leg_{T}^{+}(i,j)} \right) \ \in \ZZ[\bfY_{>0}] ,
    \]
    where
    \[
        \Leg_{T}^{+}(i,j)
        = \begin{cases}
        \{T_{k,j} \suchthat k\ge i \text{ and } T_{k,j} < T_{i, j+1}\}, & \text{if \ensuremath{T_{i, j+1}} exists and \ensuremath{T_{i, j+1}\notin \Leg_T(i,j)};}\\
        \varnothing, & \text{otherwise}
        \end{cases}
    \]
    is a sub-multiset of $\Leg_T(i,j)$ 
    of size $\#\Leg_{T}^{+}(i,j)$, 
    and $\LTplus = \left\{ (i,j)\in\NN^{2}\suchthat\Leg_{T}^{+}(i,j)\ne\varnothing\right\} $.
\end{defn}

\begin{example}
    Let $T$ be the tableau from Example~\ref{exa:tableau-simple}. Then
    \[
    \LTplus = \{(3,1), (2,3), (2,4), (1,5), (2,5)\}
    \]
    and
    \[
    \Phi_{T}(\bfY_{>0}) = (1 - Y_5^2)(1 - Y_1)(1 - Y_3)(1 - Y_2)(1 - Y_5).
    \]
\end{example}

Note that even though $0$ can appear as an entry in $T$, the polynomial $\Phi_{T}$ does not depend on the variable $Y_0$, since $T_{i, j+1} > 0$ for all $(i, j) \in \LTplus$. 
If $n=0$, then $\lambda/\mu$ is necessarily an empty skew shape, and $T$ amounts to filling the diagram $[\mu]$ with zeros, yielding $\Phi_{T}(\bfY_{>0}) = 1$. 
If $r=0$, then $E_{n,r}=[n]$ is
a set, and we recover the univariate leg polynomial \cite[Definition~1.1]{MaglioneVoll/24} upon substituting $Y_k = Y$ for all $k \in [n]$.

\begin{remark}
    Let $T\in \SSYT_{n,r}$ be a tableau with columns $c_{1},\dots,c_{\ell}$, listed from \emph{right to left}. Then, interpreting the columns as multisets,
    \[
    \Phi_{T}(\bfY_{>0}) = 1
    \iff \LTplus = \varnothing
    \iff (\forall j)\  c_j \subseteq c_{j+1}.
    \]
    This is similar to \cite[Remark~6.2]{MaglioneVoll/24}.
    Hence, by substituting $Y_k = 1$ for all $k \in [n]$, the leg polynomial $\Phi_{T}(\bfY_{>0})$ turns into an indicator function which has the value $1$ if the columns of $T$ form a multichain under multiset containment, and $0$ otherwise.
\end{remark}

\begin{defn}\label{def:projected-tableau}
    Let $C$ be a $k$-multichain $\hat{0} \lttb c_1 \letb \dots \letb c_k \letb \hat{1}$ in the half-open interval $(\hat{0}, \hat{1}] \subset \Pt_{\bfn, \bfr}$. For every $i \in [g]$ define a map $\pi_i$ that sends $C$ to a \emph{projected tableau} $\pi_i(C) \in \SSYT_{n_i, r_i}$ by considering only the $i$-th component in each element $c_j$ of $C$, and interpreting this chain of sub-multisets of $E_{n_i, r_i}$ as a (possibly empty) tableau.
\end{defn}

Notice that the columns of a projected tableau $\pi_i(C)$ 
are not necessarily distinct
when $g > 1$, even if $C$ is a strict chain.

\begin{example}
    For $g = 2$, let $\bfn = (4, 3)$ and $\bfr = (2, 3)$.  Consider the strict chain
    \begin{align*}
        C = (4, \varnothing) &\lttb (24, 2) \lttb (23, 23) \lttb (034, 023) \lttb (013, 023) \\
        &\lttb (013, 013) \lttb (0123, 0^223) \lttb (0^21234, 0^313)
    \end{align*}
    in $\Pt_{\bfn, \bfr}$. The two projected tableaux of $C$ are
    \ytableausetup{boxsize=1.3em}
    \begin{align*}
        & \begin{ytableau}
        0 & 0 & 0 & 0 & 0 & 2 & 2 & 4 \\
        0 & 1 & 1 & 1 & 3 & 3 & 4 \\
        1 & 2 & 3 & 3 & 4 \\
        2 & 3 \\
        3 \\
        4
        \end{ytableau} 
        && \begin{ytableau}
        0 & 0 & 0 & 0 & 0 & 2 & 2 \\
        0 & 0 & 1 & 2 & 2 & 3 \\
        0 & 2 & 3 & 3 & 3 \\
        1 & 3 \\
        3
        \end{ytableau} \\[1ex]
        \vspace{5ex}
        &\quad\ \ \pi_1(C) \in \SSYT_{4,2} &
        &\quad\ \ \pi_2(C) \in \SSYT_{3,3}
    \end{align*}
    and each of them has repeating columns.
\end{example}

\begin{observation}
    The polynomial from Definition~\ref{def:W_poly} can be written as
\[
    W_C(\bfY) 
    = \prod_{i=1}^{g} \Theta_{\pi_i(C)}(Y_{i,0}) \Phi_{\pi_i(C)}(\bfY_i) \in \ZZ[\bfY].
\]
\end{observation}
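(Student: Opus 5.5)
The identity splits over the components, so the plan is to reduce to a single component and then match factors one by one. By Definition~\ref{def:W_poly} and Definition~\ref{def:w_ab_pair},
\[
W_C(\bfY)=\prod_{j=0}^{k}\prod_{i=1}^{g}\theta_{c_j^{(i)},c_{j+1}^{(i)}}(Y_{i,0})\,\phi_{c_j^{(i)},c_{j+1}^{(i)}}(\bfY_i).
\]
Interchanging the two products reduces the claim to the case $g=1$. So fix a multichain $\varnothing=c_0\letb c_1\letb\dots\letb c_k\letb c_{k+1}=E_{n,r}$ in $\Pt_{n,r}$, and let $T=\pi_1(C)$ be the tableau whose columns, read from right to left, are $c_1,\dots,c_k$. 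Repeated columns, and empty columns if any, cause no trouble below. It then suffices to show two identities:
\[
\prod_{j=0}^{k}\theta_{c_j,c_{j+1}}(Y_0)=\Theta_T(Y_0),
\qquad
\prod_{j=0}^{k}\phi_{c_j,c_{j+1}}(\bfY_{>0})=\Phi_T(\bfY_{>0}).
\]

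The $\theta$ identity is a direct comparison. Let $e_j$ be the number of zeros in $c_j$, so $e_0=0$ and $e_{k+1}=r$. Then $\theta_{c_j,c_{j+1}}=\binom{e_{j+1}}{e_j}_{Y_0}$. The factor for $j=0$ is $\binom{e_1}{0}_{Y_0}=1$, and the remaining factors are exactly those of Definition~\ref{def:ThetaSkewTableau}, with the convention $e_{\ell+1}=r$.

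For the $\phi$ identity, fix $1\le j\le k-1$ and put $a=c_j$ and $b=c_{j+1}$. In $T$, $b$ is some column $j'$ and $a$ is the column $j'+1$ immediately to its right. The boundary pairs contribute $1$ on both sides: $\phi_{\varnothing,c_1}=\phi_{c_k,E_{n,r}}=1$, and these pairs correspond to no adjacent columns of $T$. The plan is to give a bijection between $\Lplus_{a,b}$ and the cells of $\LTplus$ in column $j'$, matching each factor $1-Y_x^{\Delta_x(a,b)}$ with the corresponding factor in $\Phi_T$.

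Take $x\in[n]$ with $x=T_{i,j'+1}$, i.e.\ $x\in a$ sits in row $i$ of the right column. Two facts about the rows above $i$ drive the argument. Since the positive entries of a column strictly increase and the zeros lie on top, the entries of $a$ smaller than $x$ are exactly those in rows $1,\dots,i-1$, so $s_x(a)=i-1$. By row semistandardness, $T_{k',j'}\le T_{k',j'+1}<x$ for all $k'<i$, so the entries of $b$ in rows above $i$ are also $<x$. Two consequences follow:
\begin{itemize}
\item $x\notin b$ if and only if $x\notin\Leg_T(i,j')$, so $x\in\Lplus_{a,b}$ exactly when the ``otherwise'' clause of Definition~\ref{def:SkewLegPoly} does not apply;
\item $\#\Leg_T^+(i,j')=s_x(b)-(i-1)=\Delta_x(a,b)$.
\end{itemize}
Moreover $\Delta_x(a,b)>0$ in this case, by the Observation following Definition~\ref{def:SkewLegPairPoly} (using $a\letb b$). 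Hence the cells of $\LTplus$ in column $j'$ are precisely the cells $(i,j')$ with $T_{i,j'+1}\in\Lplus_{a,b}$, and the factors agree. Multiplying over all $j$ gives the second identity, and with it the Observation.

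The only delicate step is this leg count: getting the left/right orientation between chain order and column order right, and handling the zeros. This is where the positivity of $x$ is needed, to guarantee that all zeros of both columns lie strictly above row $i$.
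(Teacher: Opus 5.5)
Your proof is correct, and it is exactly the definition-unwinding the paper leaves implicit: the paper states this as an Observation without proof, and it tacitly uses the two-column case ($\phi_{a,b}=\Phi_T$ for $T$ with columns $b,a$) in the proof of Lemma~\ref{t:refined_Z_and_M}. One correction to your closing remark: zeros of the left column $b$ can occur in rows $\ge i$ (e.g.\ $a=02$, $b=0^3$ in $\Pt_{2,3}$ with $x=2$, $i=2$), which is harmless because such zeros are counted in both $\#\Leg_T^+(i,j')$ and $s_x(b)$; positivity is really needed only for the column $a$, and you should also say explicitly that cells with $T_{i,j'+1}=0$ never lie in $\LTplus$, since then $T_{i,j'}=0$.
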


\begin{remark}\label{rem:refined_weights}
    Recall that the \emph{weight} of a tableau $T$ is the vector $\wt(T) = (m_i)_i$, where $m_i$ is the number of cells with entry $i$ in $T$. The generating function~\eqref{eq:skewHLS-gf}, under substitutions such as  $X_c = \prod_{i\in c} x_i$ for $g = 1$, belongs to the large family of generating functions that sum $\prod_i x_i^{m_i}$ over all semistandard tableaux of a given shape (yielding Schur functions), a given weight (e.g., only standard tableaux), or with a bounded maximal entry (as in our case).
    See \cite[(6.6) and Section~7]{MaglioneVoll/24}.
\end{remark}

\section{Specializations}
\label{sec:specializations}

This work was motivated by several reciprocity results, which are special cases of Theorem~\ref{t:skewHLS_reciprocity}. In this section we recall the corresponding specializations of $\HLS_{\bfn, \bfr}(\bfY; \bfX)$, state their reciprocity properties and make some additional remarks.

\subsection{The \texorpdfstring{$g = 1$}{g = 1} case}

In this case, $\HLS_{\bfn,\bfr} = \HLS_{(n),(r)}$ is a simultaneous generalization of the classical Igusa function and the Hall--Littlewood--Schubert series. 

On the one hand, if
$n=0$, the poset $\Pt_{0,r}$ is a chain with $r+1$ elements. The skew $\HLS$ series with these parameters is the following important function.

\begin{defn}
    Let $r \in \NN_0$.
    Using a single variable $Y$
    and $r$ variables $\bfX = (X_i)_{i \in [r]}$,
    the \emph{classical Igusa function} is
    \[
        I_r(Y; \bfX) 
        \coloneqq \sum_{J \subseteq [r]} \binom{r}{J}_Y \prod_{j\in J} \frac{X_j}{1 - X_j}.
    \]
\end{defn}

This function was introduced in~\cite[Theorem~4]{Voll/05}, and used there and in~\cite{ScheinVoll1/15}
to prove functional equations for zeta functions of nilpotent groups and rings. The proof of these functional equations relies on the following reciprocity property.

\begin{theorem}[{\cite[Proposition~4.2]{ScheinVoll1/15}}]
    For all $r\in \NN_0$,
    \[
    I_r(Y^{-1}; \bfX^{-1}) = (-1)^r X_r\, Y^{-\binom{r}{2}} I_r(Y; \bfX).
    \]
\end{theorem}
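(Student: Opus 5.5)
The cleanest route is to deduce this statement as the special case $g=1$, $n=0$ of the main reciprocity result, Theorem~\ref{t:skewHLS_reciprocity}. For $n=0$ the poset $\Pt_{0,r}$ is the chain $\varnothing = 0^0 \lttb 0^1 \lttb \dots \lttb 0^r = E_{0,r}$. We identify $0^j$ with $j \in \{0,1,\dots,r\}$ and set $X_j \coloneqq X_{0^j}$. Strict chains in the half-open interval $(\hat 0,\hat 1]$ are then exactly the subsets $J \subseteq [r]$.

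The first step is to compute $W_C$ for such a chain. Write $C$ as $0 = j_0 < j_1 < \dots < j_k \le j_{k+1} = r$. Since $n=0$, there are no positive coordinates, so every $\phi$ factor equals $1$. The $\theta$ factors give
\[
    W_C(Y) = \prod_{t=0}^{k} \binom{j_{t+1}}{j_t}_{Y},
\]
with $Y \coloneqq Y_{1,0}$. By the definition of the $Y$-multinomial coefficient (the factor with $j_0=0$ equals $1$), this product is $\binom{r}{J}_Y$. The subtle case is $r \in J$, where the last factor is $\binom{r}{r}_Y=1$; this agrees with the underlying-set remark after the definition of $\binom{n}{I}_Y$.

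It follows that $\HLS_{(0),(r)}(Y;\bfX) = I_r(Y;\bfX)$. Substituting $g=1$, $n=0$ into Theorem~\ref{t:skewHLS_reciprocity} gives $N=r$, $K(Y)=Y^{\binom r2}$ and $X_{\hat1}=X_r$. This is exactly the claimed identity.

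If one prefers not to rely on the main theorem, which is proved only later in the paper, a direct argument is also available. Use $\frac{X^{-1}}{1-X^{-1}} = -\frac{1}{1-X} = -1-\frac{X}{1-X}$. Expanding $I_r(Y^{-1};\bfX^{-1})$ then gives
\[
    \sum_{J} \binom{r}{J}_{Y^{-1}} (-1)^{|J|} \sum_{J' \subseteq J} \prod_{j\in J'} \frac{X_j}{1-X_j}.
\]
Two further facts are needed:
\begin{itemize}
    \item the standard identity $\binom{m}{k}_{Y^{-1}} = Y^{-k(m-k)}\binom mk_Y$;
    \item the $q$-analog of the alternating flag sum, namely that for fixed $J'$, summing $(-1)^{|J|}\binom{r}{J}_{Y}$ twisted by the inversion powers over $J \supseteq J'$ yields $(-1)^r Y^{-\binom r2}\binom{r}{J'}_Y$, up to the handling of $X_r$.
\end{itemize}
The second fact is the $q$-inversion/Möbius identity for chains of subspaces, $\sum_J (-1)^{|J|} Y^{\text{inv}}\binom{r}{J}_Y$ equals a signed power. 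The factor $X_r$ then absorbs the difference between $-1/(1-X_r)$ and $X_r/(1-X_r)$.

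I expect the main obstacle, in the direct route, to be this alternating $q$-multinomial identity, which is essentially the Euler characteristic of the Tits building. I would handle it by induction on $r$, conditioning on the largest element of $J$ below $r$. Through the main theorem, however, the statement is immediate once $W_C=\binom rJ_Y$ is checked.
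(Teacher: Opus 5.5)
Your main argument is exactly how the paper obtains this statement. The paper records it as the $g=1$, $n=0$ specialization of Theorem~\ref{t:skewHLS_reciprocity}, and your checks are precisely what that specialization needs: $W_C(Y)=\binom{r}{J}_Y$ (the $\phi$ factors are trivial and the $\theta$ factors assemble into the $Y$-multinomial), $N=r$, $K(Y)=Y^{\binom{r}{2}}$, and $X_{\hat 1}=X_r$. There is no circularity, because the main theorem is proved independently via the inverse of the $q$-Pascal matrix. Your optional direct route is essentially that same proof unwound for $n=0$, and it is not needed. The one caveat is the degenerate case $r=0$: there $\hat 0=\hat 1$, $I_0$ contains no variable $X_0$, and Theorem~\ref{t:skewHLS_reciprocity} itself tacitly requires $N\ge 1$, so the reduction should be read for $r\ge 1$.
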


On the other hand, if $n \in \NN_0$ is arbitrary and $r=0$, then the skew $\HLS$ series is the Hall--Littlewood--Schubert series introduced in~\cite[Definition~1.2]{MaglioneVoll/24}, upon substituting $Y_k = Y$ for all $k \in [n]$.
Here, the poset $\Pt_{n,0} \setminus \{\varnothing\}$ is isomorphic to $\mathsf{T}_n$ from~\cite[Section~6]{MaglioneVoll/24}, but note that our partial order $\letb$ is opposite to the order $\sqsubseteq$ used there. Also, we define $\letb$ on all of $\Pt_{n,0}$. The inclusion of $\varnothing$ has the benefit that, for general $n$ and $r$, the graded poset $\Pt_{n,r}$ is self-dual, with the complementation map $f\colon \Pt_{n,r} \to \Pt_{n,r}$, defined by $f(a) \coloneqq E_{n,r} \setminus a$ for all $a \in \Pt_{n,r}$, serving as an order-reversing bijection.

For the original definition of the $\HLS$ series, write $\Phi_T(Y)$ for the univariate leg polynomial, by substituting $Y_k = Y$ for all $k \in [n]$ in Definition~\ref{def:SkewLegPoly}. 
A semistandard Young tableau is called \emph{reduced} if its columns are pairwise distinct; write $\rSSYT_{n,r}$ for the finite set of reduced tableaux in $\SSYT_{n,r}$. Set $\rSSYT_{n} \coloneqq \rSSYT_{n,0}$ for any $n$, and
consider a tableau $T \in \rSSYT_{n}$ as 
the set of its columns.

\begin{defn}[{\cite[Definition~1.2]{MaglioneVoll/24}}]
    Let $n\in \NN$, and let $\bfX = (X_J)_{\varnothing \ne J \subseteq [n]}$ be $2^n - 1$ variables. The \emph{Hall--Littlewood--Schubert series} is
    \[
    \HLS_n(Y, \bfX)
        \coloneqq
        \sum_{T \in \rSSYT_{n}} \Phi_T(Y) \prod_{c \in T} \frac{X_c}{1-X_c}.
    \]
\end{defn}

In \cite{MaglioneVoll/24}, several enumeration problems are solved by suitable substitutions for the variables of $\HLS_n(Y, \bfX)$.
We remark that the univariate leg polynomial $\Phi_T(Y)$ also appears as a special case of other combinatorial functions, especially in the study of Macdonald polynomials and Pieri rules; see, e.g., \cite[(VI.6.19), (VI.7.11')]{Macdonald/95} and 
\cite{ColmenarejoRam/24, KonvalinkaLauve/13, Warnaar/13}.

The reciprocity property of $\HLS_n$ is another corollary of Theorem~\ref{t:skewHLS_reciprocity}.

\begin{theorem}[{\cite[Theorem~A]{MaglioneVoll/24}}]
For all $n \in \NN$ we have
\[
    \HLS_n(Y^{-1}, \bfX^{-1}) 
    = (-1)^n X_{[n]} Y^{-\binom{n}{2}}\, \HLS_n(Y, \bfX).
\]
\end{theorem}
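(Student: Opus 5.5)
The plan is to obtain this as the specialization $g=1$, $\bfn=(n)$, $\bfr=(0)$ of Theorem~\ref{t:skewHLS_reciprocity}, followed by the substitution $Y_k=Y$ for all $k\in[n]$. Two things must be checked: that $\HLS_{(n),(0)}$ specializes to $\HLS_n$, and that the factors $(-1)^N X_{\hat 1}K(\bfY)^{-1}$ specialize correctly.

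\emph{Step 1: identify the index sets.} For $r=0$, every element of $\Pt_{n,0}$ is a subset of $[n]$, and $\hat 1=E_{n,0}=[n]$. By the Observation interpreting $\letb$ via two-column tableaux, a strict chain $c_1\lttb\dots\lttb c_k$ in $(\hat 0,\hat 1]$ is the same thing as a semistandard tableau whose columns, read from right to left, are $c_1,\dots,c_k$. Since $c_1\ne\varnothing$, these columns are nonempty, and they are pairwise distinct by strictness. Conversely, a tableau in $\rSSYT_n$ is determined by its set of columns. Those columns are distinct nonempty subsets, and reading them from right to left gives a strict chain in $(\hat0,\hat1]$. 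The empty chain corresponds to the empty tableau. This gives a bijection $C\leftrightarrow T$ with $\{c\in C\}=\{\text{columns of }T\}$. Hence $\prod_{c\in C}X_c/(1-X_c)$ matches the corresponding product in the definition of $\HLS_n$, with $X_c=X_J$ for $c=J$.

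\emph{Step 2: match the weights.} By the Observation expressing $W_C$ via projected tableaux, and since $g=1$, we have $W_C(\bfY)=\Theta_T(Y_{1,0})\Phi_T(\bfY_1)$ with $T=\pi_1(C)$. Because $r=0\le1$, the remark after Definition~\ref{def:ThetaSkewTableau} gives $\Theta_T=1$. Substituting $Y_{1,k}=Y$ in $\Phi_T(\bfY_1)$ gives the univariate leg polynomial $\Phi_T(Y)$, as noted after Definition~\ref{def:SkewLegPoly}. Therefore $\HLS_{(n),(0)}(\bfY;\bfX)$ specializes to $\HLS_n(Y,\bfX)$. The variable $Y_{1,0}$ does not occur at all.

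\emph{Step 3: specialize the reciprocity identity.} Theorem~\ref{t:skewHLS_reciprocity} is an identity of rational functions in $\bfY,\bfX$. We may therefore substitute $Y_{1,k}=Y$ for $k\ge1$ on both sides simultaneously, and the left-hand side becomes $\HLS_n(Y^{-1},\bfX^{-1})$. Here $N=n$ and $X_{\hat1}=X_{[n]}$. Also
\[
  K(\bfY)=Y_{1,0}^{\binom{0}{2}}\prod_{j=1}^n Y_{1,j}^{j-1},
\]
which becomes $Y^{\sum_{j=1}^n(j-1)}=Y^{\binom n2}$. This yields exactly the displayed identity.

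The only delicate point is Step 1. One must confirm that the chain-to-tableau dictionary uses the same ordering of columns (right to left, increasing in $\letb$) as the definition of $\Phi_T$, so that the leg polynomial is computed on the correct tableau. One must also confirm that the reversal of order relative to $\sqsubseteq$ in \cite{MaglioneVoll/24} causes no mismatch. Both are handled by working directly with $\rSSYT_n$ and the two-column Observation, rather than with the poset $\mathsf T_n$.
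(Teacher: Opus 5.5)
Your proposal is correct and takes the same route as the paper, which obtains this statement as the specialization $g=1$, $\bfr=(0)$, $Y_{1,k}=Y$ of Theorem~\ref{t:skewHLS_reciprocity}. It relies on the same chain--tableau dictionary and the Observation $W_C=\prod_i\Theta_{\pi_i(C)}\Phi_{\pi_i(C)}$, and your bookkeeping ($N=n$, $X_{\hat 1}=X_{[n]}$, $\Theta_T=1$, $K(\bfY)\mapsto Y^{\binom n2}$) is simply a more explicit version of the paper's brief remark.
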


\subsection{The general \texorpdfstring{$g$}{g} case}
\label{sec:general-g-specializations}

Let $\bfn = \bfzero = (0, \dots, 0) \in \NN_0^g$, and let $\bfr \in \NN_0^g$ be arbitrary. The corresponding $\HLS_{\bfzero, \bfr}$ series is the generalized Igusa function. In particular, for $\bfr = \bfone = (1,\dots,1)  \in \NN_0^g$, the skew $\HLS_{\bfzero,\bfone}$ series is the weak order Igusa function.

\begin{defn}[{\cite[Definition~2.9]{ScheinVoll1/15}}]
    Let $g \in \NN$, and let $\bfX = (X_J)_{\varnothing \ne J \subseteq [g]}$ be $2^g - 1$ variables.
    The \emph{weak order Igusa function} is
    \[
        I_g^{\WO}(\bfX) 
        \coloneqq \sum_{\varnothing \ne J_{1} \subsetneq \dots \subsetneq J_{\ell} \subseteq [g]} \prod_{i=1}^{\ell} \frac{X_{J_{i}}}{1-X_{J_{i}}}.
    \]
\end{defn}

Note that none of the variables $\bfY$ from the skew $\HLS$ series appears in this expression, since the $Y$-multinomial coefficients $\Theta_T(Y) = 1$ are all trivial.
This function was introduced in~\cite{ScheinVoll1/15}, and used for computations of the local normal zeta functions of Heisenberg groups over rings of integers of number fields, for unramified primes. Its reciprocity property \cite[Proposition~4.1]{ScheinVoll1/15} is a corollary of Theorem~\ref{t:skewHLS_reciprocity}.

More generally, if $\bfn = \bfzero \in \NN_0^g$ and $\bfr = (r_1, \dots, r_g) \in \NN_0^g$ is arbitrary, 
then
    \[
        \HLS_{\bfzero,\bfr}(\bfY; \bfX)
        = I^\WO_{\bfr}(\bfY; \bfX)
    \]
is the \emph{generalized Igusa function}~\cite[Definition~3.5]{CSV/19}, 
having applications to
zeta functions of a large class of nilpotent groups and rings.
Here the variables $\bfY$ are $(Y_{i,0})_{i \in [g]}$, and the poset $\Pt_{\bfzero,\bfr}$ is a direct product of chains.
For every multichain $C$ in $(\hat{0}, \hat{1}]$ and index $i \in [g]$,
the projected tableau $\pi_i(C)$ is filled with zeros, 
hence $\Phi_{\pi_i(C)}(\bfY_i) = 1$.
The generalized Igusa function
interpolates between two special cases already mentioned:
the classical Igusa function, corresponding to $\bfr = (r)$, and the weak order Igusa function, corresponding to $\bfr = \bfone$.

The following reciprocity property of the generalized Igusa function is a corollary of Theorem~\ref{t:skewHLS_reciprocity}.

\begin{theorem}[{\cite[Theorem~3.8]{CSV/19}}]
    Let $\bfr = (r_1, \dots, r_g) \in \NN_0^g$ 
    and $N \coloneqq \sum_i r_i$. Then
    \[
        I_{\bfr}^{\WO}(\bfY^{-1}; \bfX^{-1}) 
        = (-1)^{N}X_{\hat{1}} %
        \left(\prod_{i=1}^{g}Y_{i}^{-\binom{r_{i}}{2}}\right)I_{\bfr}^{\WO}(\bfY; \bfX).
    \]
\end{theorem}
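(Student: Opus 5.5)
The plan is to obtain this statement as the special case $\bfn = \bfzero$ of Theorem~\ref{t:skewHLS_reciprocity}. That theorem will be proved in Section~\ref{sec:reciporcity-proof} via its equivalent form, Theorem~\ref{t:skewModifiedHLS_reciprocity}, and here I take it as given. The only real work is to check that every ingredient of Theorem~\ref{t:skewHLS_reciprocity} specializes correctly when all $n_i = 0$.

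First, for $n = 0$ the poset $\Pt_{0,r}$ consists of the tuples $(a_0)$ with $0 \le a_0 \le r$. The order $\letb$ is then the usual order on $\{0,1,\dots,r\}$, so $\Pt_{0,r}$ is a chain with $r+1$ elements. Consequently $\Pt_{\bfzero,\bfr}$ is the product of chains $\prod_i \{0,\dots,r_i\}$, which is the indexing poset of the variables $\bfX$ in \cite[Definition~3.5]{CSV/19}. I will identify $X_c$ with the corresponding variable there; in particular $X_{\hat 1}$ corresponds to the top element $(r_1,\dots,r_g)$.

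Next, I compute the weights. For $a,b \in \Pt_{0,r}$ the set $\Lplus_{a,b}$ is a subset of $[0] = \varnothing$, so $\phi_{a,b} = 1$ whenever $a \letb b$. Hence
\[
w_{a,b}(\bfY) = \prod_i \binom{b^{(i)}_0}{a^{(i)}_0}_{Y_{i,0}},
\]
and for a chain $C$ the product $W_C(\bfY)$ telescopes into $\prod_i \binom{r_i}{\mu^{(i)}}_{Y_{i,0}}$. This is a product of $Y_{i,0}$-multinomial coefficients determined by the $i$-th projections of $C$, equivalently $\prod_i \Theta_{\pi_i(C)}(Y_{i,0})$. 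Comparing with \cite[Definition~3.5]{CSV/19}, which sums over chains in $(\hat 0,\hat 1]$ of the product of chains with exactly these multinomial weights, gives $\HLS_{\bfzero,\bfr}(\bfY;\bfX) = I^{\WO}_{\bfr}(\bfY;\bfX)$ after renaming $Y_{i,0}$ as $Y_i$. This is the identification asserted just before the theorem.

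Finally, I substitute $\bfn = \bfzero$ into Theorem~\ref{t:skewHLS_reciprocity}. The exponent becomes $N = \sum_i (0 + r_i) = \sum_i r_i$. In $K(\bfY)$ the inner products over $1 \le j \le n_i$ are empty, so
\[
K(\bfY) = \prod_i Y_{i,0}^{\binom{r_i}{2}}.
\]
The identity of Theorem~\ref{t:skewHLS_reciprocity} then reads exactly as the displayed reciprocity for $I^{\WO}_{\bfr}$.

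The only step requiring care, and the one I expect to be the main obstacle, is the bookkeeping in matching \cite[Definition~3.5]{CSV/19} with Definition~\ref{def:skewHLS}. One has to confirm that the chains are taken in the half-open interval, so that $X_{\hat 1}$ appears, and that the multinomial conventions, namely which end of the chain carries $e_{\ell+1} = r$, agree with those in \cite{CSV/19}. If the conventions there are reversed, I would compose with the complementation anti-automorphism $a \mapsto E_{n,r}\setminus a$ of each chain. This map preserves the multiset of consecutive differences, and hence preserves the $Y$-multinomial coefficients.
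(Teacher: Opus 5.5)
Your proposal is correct and matches how the paper obtains this statement. The paper presents it as the $\bfn=\bfzero$ specialization of Theorem~\ref{t:skewHLS_reciprocity}: the projected tableaux are filled with zeros, so $\phi\equiv 1$, which gives $\HLS_{\bfzero,\bfr}=I^{\WO}_{\bfr}$, $N=\sum_i r_i$ and $K(\bfY)=\prod_i Y_{i,0}^{\binom{r_i}{2}}$. The complementation fallback in your last paragraph is unnecessary, since $\binom{r}{I}_{Y}$ depends only on the multiset of gaps of the chain and so gives the same weight whichever end the chain is read from; applying complementation to the chains themselves would also exchange $(\hat{0},\hat{1}]$ with $[\hat{0},\hat{1})$.
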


We are not aware of any previous appearance in the literature of the series $\HLS_{\bfn,\bfzero}$ with $g>1$ and $\bfn$ including parts greater than $1$.

\begin{remark}
    It is clear that the weak order Igusa function is a special case of the generalized Igusa function.
    However, in \cite[Proposition~8.5]{MaglioneVoll/24} it is shown that $I_g^{\WO}(\bfX)$ is also the specialization $\HLS_g(1, \bfX)$ of the original Hall--Littlewood--Schubert series.
    This is not a coincidence, since the posets $\Pt_{0,1}$ and $\Pt_{1,0}$ are isomorphic, and so $\Pt_{\bfzero,\bfone} \cong \Pt_{\bfone,\bfzero}$. More generally, we have $\Pt_{n,1} \cong \Pt_{n+1,0}$ for any $n \in \NN_0$.
\end{remark}

\section{Proof of reciprocity}
\label{sec:reciporcity-proof}

The proof of self-reciprocity in~\cite{MaglioneVoll/24} uses $p$-adic integration.
In this section we give a combinatorial proof of Theorem~\ref{t:skewModifiedHLS_reciprocity}, 
which is equivalent to Theorem~\ref{t:skewHLS_reciprocity}.
The proof follows an idea of R.~Sanyal
(private communication), who 
used it to prove self-reciprocity for generalized Igusa functions via combinatorial reciprocity \cite{BeckSanyal/18}.

We start with Lemma~\ref{t:unitriangular_inv} and Lemma~\ref{t:P_alternating_sum}, which are general facts.
The technical heart of this paper is Lemma~\ref{t:refined_Z_and_M}.
It is followed by Lemma~\ref{t:refined_gen_Z_and_M}, which is an extension for any $g \ge 1$.
Lemma~\ref{t:refined_gen_Z_and_M} is the main tool used in the proof of Theorem~\ref{t:skewModifiedHLS_reciprocity}.

\subsection{Zeta and M\"obius matrices}

The following result is folklore; we provide a short proof for completeness.

\begin{lemma}\label{t:unitriangular_inv}
    If $U = (u_{i,j})$ is a strictly upper-triangular square matrix over a commutative ring with $1$, then $I + U$ is invertible. Its inverse is of the form $I + V$ for a strictly upper-triangular matrix $V = (v_{i,j})$. Moreover, 
    \[
        v_{i,j}
        = \sum_{k = 0}^{j-i-1} (-1)^{k+1} 
        \sum_C
        \,\prod_{t = 0}^{k} u_{c_t,c_{t+1}}
        \qquad (\forall\, i < j),
    \] %
    where the inner summation is over all strict $k$-element chains $C = \{c_1 < \dots < c_k\}$ in the open interval $(i,j)$, with $c_0 \coloneqq i$ and $c_{k+1} \coloneqq j$.
\end{lemma}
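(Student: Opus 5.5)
Since $U$ is strictly upper-triangular of size $m\times m$, it is nilpotent, with $U^m = 0$. Indeed, $(U^k)_{i,j}$ vanishes unless $j \ge i+k$. The natural plan is therefore to use the finite geometric series: set
\[
    I + V \coloneqq \sum_{k \ge 0} (-1)^k U^k = I - U + U^2 - \dots + (-1)^{m-1}U^{m-1},
\]
and check directly that $(I+U)(I+V) = (I+V)(I+U) = I$. Both products telescope to $I - (-1)^m U^m = I$. This argument only uses commutativity of scalars in the trivial sense that $I$ commutes with $U$, so it works over any ring with $1$. Since $V = \sum_{k\ge1}(-1)^kU^k$ and each power $U^k$ with $k\ge1$ is strictly upper-triangular, $V$ is strictly upper-triangular as well. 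Uniqueness of two-sided inverses then identifies $I+V$ as \emph{the} inverse.

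For the explicit formula, I would expand the matrix power entrywise. For $i<j$ and $\ell \ge 1$,
\[
    (U^{\ell})_{i,j} = \sum_{i = c_0, c_1, \dots, c_{\ell-1}, c_\ell = j} \ \prod_{t=0}^{\ell-1} u_{c_t, c_{t+1}}.
\]
Because $u_{p,q} = 0$ unless $p<q$, only strictly increasing sequences $i = c_0 < c_1 < \dots < c_{\ell-1} < c_\ell = j$ contribute. Such a sequence is exactly a strict chain of $k \coloneqq \ell-1$ elements in the open interval $(i,j)$, together with the endpoints. Then
\[
    v_{i,j} = \sum_{\ell \ge 1} (-1)^{\ell} (U^\ell)_{i,j} = \sum_{k\ge 0} (-1)^{k+1} \sum_C \prod_{t=0}^{k} u_{c_t,c_{t+1}}.
\]
A strict chain in $(i,j)$ has at most $j-i-1$ elements, so the range $0\le k\le j-i-1$ is automatic.

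There is no serious obstacle here. The only point requiring care is the index bookkeeping: $k$ counts the interior chain elements, while the power of $U$ is $k+1$, and this is what produces the sign $(-1)^{k+1}$. One should also note that the empty chain ($k=0$) contributes $-u_{i,j}$. The indexing by integers $i<j$ is just the natural order on row and column labels. The same argument applies verbatim to any matrix indexed by a finite poset and supported on strict comparabilities, which is presumably how the lemma will be used for the zeta and M\"obius matrices later.
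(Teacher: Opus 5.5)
Your proof is correct and is essentially the paper's argument: the paper also inverts $I+U$ with the finite geometric series $\sum_{k=0}^{m-1}(-U)^k$, using nilpotency on both sides, and then expands the entries of $U^{k+1}$ as sums over strictly increasing index sequences, i.e., strict $k$-element chains in $(i,j)$. Your bookkeeping of the shift between the power $k+1$ and the number $k$ of interior chain elements is, if anything, stated more cleanly than in the paper's write-up.
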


\begin{proof}
    If $U$ is strictly upper-triangular of size $n \times n$, then $U^n = 0$ and thus
    \[
        I = I^n - (-U)^n = (I + U) \cdot \sum_{k=0}^{n-1}(-U)^k .
    \]
    Similarly, 
    \[
        \sum_{k=0}^{n-1}(-U)^k \cdot (I + U) = I .
    \]
    It follows that the matrix $I+U$ is invertible, with unique inverse $I+V$, where
    \[
        V 
        = \sum_{k=1}^{n-1}(-U)^k 
        = \sum_{k=0}^{n-2}(-U)^{k+1} .
    \]
    This matrix $V$ is clearly strictly upper-triangular.
    For any $i < j$, the entry in position $(i,j)$ of $U^{k+1}$ is 
    \[
        U_{i,j}^{k+1} 
        = \sum_k \sum_C \prod_{t = 0}^{k} u_{c_t,c_{t+1}} ,
    \]
    where the summations are over all integers $k \ge 0$ and over all strict $k$-element chains $C = {\{c_1 < \dots < c_k\}}$ in the open interval $(i,j)$, with $c_0 \coloneqq i$ and $c_{k+1} \coloneqq j$. Substituting this into the last expression for $V$ completes the proof.
\end{proof}

The following 
result
generalizes a classical expression of the M\"obius function of a poset in terms of its zeta function. This explains our choice of the letters $Z$ and $M$ to denote the matrices.

\begin{lemma}\label{t:P_alternating_sum}
    Let $(P, \le_P)$ be a finite poset, and let $\le_L$ be an arbitrary linear order on the set $P$. Let $Z = (z_{a,b})$ be a square matrix over a commutative ring with $1$, with rows and columns indexed by the elements of $P$ according to the linear order $\le_L$. If
    \begin{align*}
        z_{a,b} &= 0
        \qquad (\forall\, a \not\le_P b), \\
        z_{a,a} &= 1
        \qquad (\forall\, a \in P) ,
    \end{align*}
    and $z_{a,b}$ arbitrary for $a <_P b$,
    then $Z$ is invertible, and its inverse $M = (m_{a,b})$ satisfies 
    \begin{align*}
        m_{a,b} &= 0
        \qquad (\forall\, a \not\le_P b), \\
        m_{a,a} &= 1
        \qquad (\forall\, a \in P) ,
    \end{align*}
    and
    \[
        m_{a,b} 
        = \sum_{k \ge 0} (-1)^{k+1} 
        \sum_C
        \,\prod_{t = 0}^{k} z_{c_t,c_{t+1}}
        \qquad (\forall\, a <_P b) ,
    \]
    where the inner summation is over all strict $k$-element chains $C = \{c_1 <_P \dots <_P c_k\}$ in the open interval $(a,b) \subset P$, with $c_0 \coloneqq a$ and $c_{k+1} \coloneqq b$.
\end{lemma}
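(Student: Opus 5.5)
The plan is to reduce Lemma~\ref{t:P_alternating_sum} to Lemma~\ref{t:unitriangular_inv} by re-indexing $Z$ along a linear extension of $\le_P$. Since inverses do not depend on how rows and columns are ordered, I am free to reorder them. Fix a linear extension $\preceq$ of $\le_P$, so that $a <_P b$ implies $a \prec b$. Index both rows and columns of $Z$ by $P$ listed in $\preceq$-increasing order; call the resulting matrix $Z'$. It equals $QZQ^{-1}$ for a permutation matrix $Q$ that carries $\le_L$-positions to $\preceq$-positions.

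In this indexing $Z'$ is upper unitriangular. The diagonal entries are $z_{a,a}=1$. For $a \succ b$ we have $a\ne b$ and $a \not<_P b$, hence $a\not\le_P b$ and $z_{a,b}=0$. So $Z' = I+U$ with $U$ strictly upper triangular and $u_{a,b}=z_{a,b}$ for $a\prec b$. Lemma~\ref{t:unitriangular_inv} then says $Z'$ is invertible with inverse $I+V$. Consequently $Z$ is invertible, and its inverse $M = Q^{-1}(I+V)Q$ has entries $m_{a,b}=\delta_{a,b}+v_{a,b}$, with the convention $v_{a,b}=0$ when $a\succeq b$.

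Next I read off the required properties of $M$. First, $m_{a,a}=1$ because $V$ is strictly upper triangular. For $a\prec b$, Lemma~\ref{t:unitriangular_inv} expresses $v_{a,b}$ as the signed sum, over strict $\preceq$-chains $a\prec c_1\prec\dots\prec c_k\prec b$, of the products $\prod_t u_{c_t,c_{t+1}}$.

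The key observation is that such a product vanishes unless $c_t <_P c_{t+1}$ for every $t$. Indeed, consecutive elements are distinct, and $z_{c,d}=0$ whenever $c\not\le_P d$. So the only surviving terms come from chains that are strict $\le_P$-chains from $a$ to $b$. Conversely, every strict $\le_P$-chain in $(a,b)$ is a strict $\preceq$-chain, because $\preceq$ extends $\le_P$. Hence the sum runs exactly over strict $k$-chains in the open interval $(a,b)\subset P$, which gives the stated formula. The range $k\le j-i-1$ imposes no restriction, since a chain in $(a,b)$ has at most as many elements as the $\preceq$-interval does.

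Finally, suppose $a\not\le_P b$ with $a\ne b$. If $a\succ b$, then $m_{a,b}=v_{a,b}=0$ directly. If $a\prec b$, then no surviving chain exists: a $\le_P$-chain from $a$ to $b$ would force $a<_P b$ by transitivity. Hence every term vanishes and $m_{a,b}=0$.

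None of these steps is a real obstacle. The only step needing care is the bookkeeping that conjugating by a permutation matrix preserves entries indexed by elements of $P$, together with the matching of $\preceq$-chains and $\le_P$-chains.
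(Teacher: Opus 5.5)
Your proposal is correct and follows essentially the same route as the paper: conjugate by a permutation matrix to a linear extension of $\le_P$, apply Lemma~\ref{t:unitriangular_inv}, and observe that only strict $\le_P$-chains from $a$ to $b$ give nonzero products. Your explicit treatment of the case $a \not\le_P b$ with $a \prec b$, and of the range of $k$, spells out slightly more carefully what the paper covers in its closing remark.
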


\begin{proof}
    The three formulas defining $m_{a,b}$ depend on the partial order $\le_P$, but not on the chosen linear order $\le_L$.
    Replacing this linear order by any other linear order on the set $P$ amounts to replacing $Z$ and $M$ by $Q Z Q^{-1}$ and $Q M Q^{-1}$, respectively, for a suitable permutation matrix $Q$. If $M$ is the (left and right) inverse of $Z$ then $Q M Q^{-1}$ is the inverse of $Q Z Q^{-1}$. Therefore, it suffices to prove the claim for one specific linear order. 
    
    Let $\le_L$ be a linear extension of $\le_P$. If $a >_L b$, namely $a \not\le_L b$, then $a \not\le_P b$ and thus, by assumption, $z_{a,b} = 0$. Moreover, $z_{a,a} = 1$ for all $a \in P$. It follows that $Z = I + U$ for some strictly upper-triangular matrix $U$. By Lemma~\ref{t:unitriangular_inv}, the matrix $Z$ is invertible, with inverse matrix of the form $M = I + V$ for a strictly upper-triangular matrix $V$, and satisfies
    \[
        m_{a,b} 
        = \sum_{k \ge 0} (-1)^{k+1} 
        \sum_C
        \,\prod_{t = 0}^{k} z_{c_t,c_{t+1}}
        \qquad (\forall\, a <_L b) ,
    \]
    where the inner summation is over all strict $k$-element chains $C = \{c_1 <_L \dots <_L c_k\}$ in the open interval $(a,b) \subset P$, with $c_0 \coloneqq a$ and $c_{k+1} \coloneqq b$.
    The chains $C$ are with respect to the linear order $<_L$, but note that $c_t \ne c_{t+1}$ for all $0 \le t \le k$, and therefore $z_{c_t,c_{t+1}} \ne 0$ implies $c_t <_P c_{t+1}$ by the assumption on $Z$. It follows that nonzero products can only arise when $a = c_0 <_P c_1 <_P \dots <_P c_k <_P c_{k+1} = b$, and in particular $a <_P b$. This completes the proof.
\end{proof}

\subsection{Zeta and M\"obius matrices over \texorpdfstring{$\Pt_{n,r}$}{P\_{n,r}}}

Recall Definition~\ref{def:w_ab_pair} of the polynomial $w_{a,b}(\bfY)$. In the following Lemma~\ref{t:refined_Z_and_M} we consider only the case $g = 1$, and therefore use simplified notation: $\bfY = (Y_0, Y_1, \dots, Y_n)$ for the variables, as well as $\bfY_{>0} = (Y_1, \dots, Y_n)$ and
\[
    w_{a,b}(\bfY) = \theta_{a,b}(Y_0) \phi_{a,b}(\bfY_{>0}) \in \ZZ[\bfY]
\]
for every $a, b\in \Pt_{n,r}$.
Recall also the notation $\Delta_i(a,b)$ from Definition~\ref{def:s_and_Delta}.

\begin{lemma}\label{t:refined_Z_and_M}
    Fix a linear order $\le_L$ on $\Pt_{n,r}$.
    Let $Z = (z_{a,b})$ and $M = (m_{a,b})$ be matrices of size $(r+1)2^n \times (r+1)2^n$, with rows and columns indexed by the elements of $\Pt_{n,r}$ according to the order $\le_L$, and with the following entries:
    \begin{align*}
        z_{a,b} 
        &= w_{a,b}(\bfY) \\
        m_{a,b} 
        &= (-1)^{\Delta_{n+1}(a,b)} \prod_{i=0}^{n} Y_i^{\Delta_i(a,b)} 
        \cdot w_{a,b}(\bfY^{-1})
    \end{align*}
    for all $a, b \in \Pt_{n,r}$. Then $M$ is the inverse of $Z$.
\end{lemma}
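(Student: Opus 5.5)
The plan is to verify directly that $ZM=I$. By Lemma~\ref{t:P_alternating_sum}, $Z$ is invertible. Both $Z$ and $M$ vanish outside $a\letb b$, and both have diagonal entries $1$, since $w_{a,a}=1$ and $\Delta_i(a,a)=0$. Hence it suffices to prove, for all $a \lttb c$ in $\Pt_{n,r}$,
\[
    \sum_{a \letb b \letb c} w_{a,b}(\bfY)\,(-1)^{\Delta_{n+1}(b,c)} \prod_{i=0}^{n} Y_i^{\Delta_i(b,c)}\, w_{b,c}(\bfY^{-1}) = 0 .
\]
The choice of linear order $\le_L$ is irrelevant, exactly as in the proof of Lemma~\ref{t:P_alternating_sum}. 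A useful structural fact is that $\Delta_i$ is additive: $\Delta_i(a,c)=\Delta_i(a,b)+\Delta_i(b,c)$. Also, each summand factors into a $Y_0$-part, which depends only on $a_0,b_0,c_0$, and one factor for each position $i\in[n]$. The factor at position $i$ depends only on $(a_i,b_i,c_i)$ and on the two prefix differences $\Delta_i(a,b)$ and $\Delta_i(b,c)$.

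First, the case $n=0$. Here $\Pt_{0,r}$ is a chain, and the identity reads
\[
    \sum_{k=a_0}^{c_0} \binom{k}{a_0}_{Y_0}\binom{c_0}{k}_{Y_0^{-1}}(-1)^{c_0-k}Y_0^{\binom{c_0}{2}-\binom{k}{2}}=\delta_{a_0,c_0}.
\]
After rewriting $\binom{c_0}{k}_{Y_0^{-1}}=Y_0^{-k(c_0-k)}\binom{c_0}{k}_{Y_0}$, this is Gaussian inversion. It follows from the $Y_0$-binomial theorem $\prod_{j=0}^{m-1}(1-xY_0^j)=\sum_k(-1)^kY_0^{\binom k2}\binom mk_{Y_0}x^k$ with $x=1$, $m=c_0-a_0\ge1$.

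For general $n$, I would induct on $n$ by adding positions one at a time. To make the induction go through, I will strengthen the claim: instead of the full sum, consider the partial sum over truncations $b'=(b_0,\dots,b_{i-1})$ lying between the truncations of $a$ and $c$, with the prefix sum $s_i(b)=t$ held fixed. The aim is to show this generating function has a closed form $F_i(t)$ of $Y$-binomial type. One natural guess is that it is supported only on $t=s_i(a)$ when $a'\ne c'$, up to an explicit factor.

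Passing from $i$ to $i+1$, one sums over $b_i\in\{0,1\}$, subject to the new dominance constraints $s_{i+1}(a)\le s_{i+1}(b)\le s_{i+1}(c)$. The local factor at position $i$ has the form $(1-Y_i^{x})$ or $(1-Y_i^{-y})$, with $x=t-s_i(a)$ and $y=s_i(c)-t$, together with $(-Y_i)^{y}$ from $M$. There are finitely many cases according to $(a_i,c_i)$, and in each one should check a two-term cancellation. For example, if $a_i=1$, $c_i=0$, the terms $b_i=0$ and $b_i=1$ give $(1-Y_i^x)(\cdots)$ and $(1-Y_i^{-y})(\cdots)$, and these should telescope against the shift of $t$ by one. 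The final step uses $\Delta_{n+1}$ and the sign $(-1)^{\Delta_{n+1}(b,c)}$ to conclude that the total vanishes.

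The main obstacle is finding the correct strengthened inductive statement $F_i(t)$. The summand genuinely couples positions through the prefix sums, so the naive statement ``the sum is $\delta$'' is not inductive. I would first compute small cases ($n=1,2$, $r\le2$) to guess $F_i(t)$. If that fails, my fallback is a sign-reversing involution on pairs $(b,\text{choice of term in the expanded products})$. The involution would toggle at the first position where $b$ can be changed while preserving $a\letb b\letb c$; the difficulty is that it must be weight-preserving, which is again where the additivity of $\Delta_i$ enters.
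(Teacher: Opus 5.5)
Your reduction to checking $ZM=I$ entrywise and your base case $n=0$ (Gaussian inversion via the $Y_0$-binomial theorem) are correct and agree with the paper. The gap is the induction step, which is the entire content of the lemma. You never produce the strengthened statement $F_i(t)$. Your tentative support guess is false: already for $r=1$, truncating after position $0$ with $a_0=0<c_0=1$, the partial sum at $t=1$ consists of the single term $b_0=1$, which is a unit, although $t\neq s_1(a)=0$. The involution fallback is not specified. More importantly, the obstacle you diagnose is not real. The unstrengthened statement \emph{is} inductive if you take as hypothesis the full matrix identity $\sum_b z_{a,b}m_{b,c}=\delta_{a,c}$ for \emph{all} pairs $a,c$ of the smaller poset $\Pt_{n,r}$. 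You should not track fixed-$t$ partial sums attached to one pair of the larger poset.

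Concretely, write $b^+=b\cup\{n+1\}$, $Y=Y_{n+1}$, $\Delta=\Delta_{n+1}$. The definitions give, for all $a,b\in\Pt_{n,r}$, the following relations; they use $\Delta_{n+2}(a,b^+)=\Delta(a,b)+1$, $\Delta_{n+2}(a^+,b)=\Delta(a,b)-1$, and the one extra leg factor for the pair $(a^+,b)$:
\[
z^+_{a,b}=z^+_{a,b^+}=z^+_{a^+,b^+}=z_{a,b},\qquad z^+_{a^+,b}=(1-Y^{\Delta(a,b)})\,z_{a,b},
\]
\[
m^+_{a,b}=m^+_{a^+,b^+}=-m^+_{a,b^+}=Y^{\Delta(a,b)}m_{a,b},\qquad m^+_{a^+,b}=(1-Y^{\Delta(a,b)})\,m_{a,b}.
\]
The new dominance constraints are absorbed automatically, since $1-Y^0=0$. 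Summing over $b_{n+1}\in\{0,1\}$, each entry of $Z^+M^+$ becomes $\sum_{b}z_{a,b}m_{b,c}\,\beta_b$. By additivity $\Delta(a,b)+\Delta(b,c)=\Delta(a,c)$, the bracket $\beta_b$ does not depend on $b$. It equals $1$, $0$, $1-Y^{\Delta(a,c)}$, $Y^{\Delta(a,c)}$ for the entries $(a,c)$, $(a,c^+)$, $(a^+,c)$, $(a^+,c^+)$ respectively. In your case $a_i=1$, $c_i=0$, the bracket is $(1-Y^{\Delta(a,b)})Y^{\Delta(b,c)}+(1-Y^{\Delta(b,c)})=1-Y^{\Delta(a,c)}$; no shift of $t$ is involved. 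The hypothesis then gives $\delta_{a,c}$, $0$, $0$, $\delta_{a,c}$, using $\Delta(a,a)=0$. The paper packages exactly this in reverse lexicographic order, with $D=\operatorname{diag}(Y^{s_{n+1}(a)})$:
\[
Z^+=\begin{pmatrix} Z & Z\\ Z-D^{-1}ZD & Z\end{pmatrix},\qquad M^+=\begin{pmatrix} D^{-1}MD & -D^{-1}MD\\ M-D^{-1}MD & D^{-1}MD\end{pmatrix}.
\]
The prefix-sum coupling that worried you is just conjugation by a diagonal matrix, so nothing beyond $ZM=I$ for $\Pt_{n,r}$ is needed.
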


\begin{proof}
    The proof is by induction on $n$.
    Assume first that $n = 0$ (and $r \ge 0$ is arbitrary).

    For $a = (a_0)$ and $b = (b_0)$ in $\Pt_{0,r}$, we have $a \letb b$ if and only if $a_0 \le b_0$. 
    Clearly 
    \[
        \phi_{a,b}(\bfY_{>0}) 
        = \begin{cases}
            1, & \text{if } a_0 \le b_0; \\
            0, & \text{otherwise.}
        \end{cases}
    \]
    It follows that, for all $a = (a_0)$ and $b = (b_0)$ in $\Pt_{0,r}$,
    \begin{align*}
        z_{a,b} &= \binom{b_0}{a_0}_{Y_0} \\
        m_{a,b} &= (-1)^{\Delta_1(a,b)} Y_0^{\Delta_0(a,b)} \binom{b_0}{a_0}_{Y_0^{-1}} .
    \end{align*}
    For convenience, change notation slightly from $a_0$, $b_0$, and $Y_0$ to $i$, $j$, and $q$, respectively.
    Using the natural linear order $0 \le_L 1 \le_L \cdots \le_L r$, the entries of the matrices are
    \begin{align*}
        z_{i,j} &= \binom{j}{i}_{q} \\
        m_{i,j} &= (-1)^{j - i} q^{\binom{j}{2} - \binom{i}{2}} \binom{j}{i}_{q^{-1}} .
    \end{align*}
    
    The matrix $Z$ is known as the $q$-Pascal matrix, and its inverse and 
    other properties
    are well-known;
    see, e.g., Comtet~\cite[p.~118]{Comtet/74}, Zheng~\cite[equation~(2.5)]{Zheng/08}, or Verde-Star~\cite[equation~(4.17)]{VerdeStar/11}.
    For completeness, we include a short proof that $M$ is the inverse of $Z$ in this case.

    Working over the field of rational functions $\QQ(q)$, 
    it suffices to show that $ZM = I$.
    Element-wise, we want to show, for all $0 \le i,k \le r$, that
    \[
        \sum_{j} z_{ij} m_{jk} = \delta_{ik} \coloneqq
        \begin{cases}
            1, \qquad i = k\\
            0, \qquad i \ne k.
        \end{cases}
    \]
    Since $Z$ and $M$ are upper-triangular, it suffices to prove these equalities for $i \le k$. 
    Indeed, noting that $[i]!_{q^{-1}} = q^{-\binom{i}{2}} [i]!_q$ and $\binom{j}{i}_{q^{-1}} = q^{\binom{i}{2} + \binom{j-i}{2} -\binom{j}{2}} \binom{j}{i}_{q}$, we have for $i \le k$
    \begin{align*}
        \sum_{j} z_{ij} m_{jk} 
        &= \sum_{j=i}^{k} \binom{j}{i}_{q} (-1)^{k - j} q^{\binom{k}{2} - \binom{j}{2}} \binom{k} {j}_{q^{-1}} 
        = \sum_{j=i}^{k} \binom{j}{i}_{q} (-1)^{k - j} q^{\binom{k-j}{2}} \binom{k}{j}_{q} \\
        &= \sum_{j=i}^{k} (-1)^{k - j} q^{\binom{k-j}{2}} \binom{k}{i}_{q} \binom{k-i}{k-j}_{q} 
        = \binom{k}{i}_{q} \cdot \sum_{t=0}^{k-i} (-1)^{t} q^{\binom{t}{2}} \binom{k-i}{t}_{q} = \delta_{ik},
    \end{align*}
    where the last equality follows from taking $x = 1$, $y = -1$, and $N = k - i$ in the $q$-binomial theorem
    \[
        \prod_{t=0}^{N-1} (x + q^t y) 
        = \sum_{t=0}^{N} \binom{N}{t}_q q^{\binom{t}{2}} x^{N-t} y^t .
    \]
    This completes the proof for $n = 0$.

    For the induction step (on $n$), fix any $r \ge 0$. 
    We want to use a linear order on $\Pt_{n,r}$ for which the matrices $Z$ and $M$ have a nice block structure, but are not necessarily upper triangular (namely, the linear order is not necessarily a linear extension of $\letb$). We choose the \emph{reverse lexicographic} order $\lerl$, defined as follows: 
    if $a = (a_0, a_1, \dots, a_n)$ and $b = (b_0, b_1, \dots, b_n)$ are in $\Pt_{n,r}$, and $a \ne b$, then 
    $a \ltrl b$ if and only if there exists $0 \le k \le n$ such that $a_i = b_i$ for all $i > k$ while $a_k < b_k$.
    Viewed as multisets, $a \ltrl b$ if the maximal element of the symmetric difference of $a$ and $b$ belongs to $b$.
    
    For example, if $n = r = 2$ then, in the multiset interpretation of the elements of $\Pt_{2,2}$,
        \[
            \varnothing \lerl 0 \lerl 00 \lerl 1 \lerl 01 \lerl 001 \lerl 2 \lerl 02 \lerl 002 \lerl 12 \lerl 012 \lerl 0012. 
        \]
    Note that the first $(r+1)2^n$ elements of $P_{n+1,r}^t$ in this order are the elements of $\Pt_{n,r}$ (in this order), and they are followed by the same elements with $n+1$ added. 
    
    Denote by $Z$ and $M$ the matrices corresponding to $\Pt_{n,r}$ (with rows and columns in reverse lexicographic order), and by $Z^+$ and $M^+$ the matrices corresponding to $\Pt_{n+1,r}$, considered as $2\times 2$ block matrices with blocks of the same size. Their entries are in $\ZZ[\bfY]$ where $\bfY = (Y_0, \dots, Y_{n+1})$.
    For a multiset $a \in \Pt_{n,r}$ denote $a^+ \coloneqq a \cup\{n+1\}$.
    The four blocks of $Z^+$ (and $M^+$) correspond to the four types of row-column pairs:
    \[
        (a,b), \quad 
        (a,b^+), \quad 
        (a^+,b), \quad
        (a^+,b^+) 
        \qquad (\forall\, a, b \in \Pt_{n,r}).
    \]
    
    Observe that
    \begin{align*}
        a \letb b^+ &\iff a \letb b , \\
        a^+ \letb b &\iff a \letb b \text{\ and\ } \Delta_{n+2}(a,b) \ge 1 , \\
        a^+ \letb b^+ &\iff a \letb b.
    \end{align*}
    Elementary computation gives $\Delta_{n+2}(a,b) = \Delta_{n+1}(a,b)$, and so
    \begin{align*}
        \Delta_{n+2}(a,b^+) &= \Delta_{n+1}(a,b) + 1 , \\
        \Delta_{n+2}(a^+,b) &= \Delta_{n+1}(a,b) - 1 , \\
        \Delta_{n+2}(a^+,b^+) &= \Delta_{n+1}(a,b) ;
    \end{align*}
    While for all $0 \le i \le n+1$:
    \[
        \Delta_i(a,b)
        = \Delta_i(a,b^+) 
        = \Delta_i(a^+,b)
        = \Delta_i(a^+,b^+).
    \]
    Also,
    \[
        \theta_{a,b}(Y_0)
        = \theta_{a,b^+}(Y_0)
        = \theta_{a^+,b}(Y_0)
        = \theta_{a^+,b^+}(Y_0).  
    \]
    Computing the refined leg polynomials gives
    \begin{align*}
        \phi_{a,b^+}(\bfY_{>0}) &= \phi_{a,b}(\bfY_{>0}) \\
        \phi_{a^+,b^+}(\bfY_{>0}) &= \phi_{a,b}(\bfY_{>0}).
    \end{align*}
    The computation of $\phi_{a^+,b}(\bfY_{>0})$ is a little more involved. 
    If $a \letb b$ but $\Delta_{n+1}(a,b) = 0$, then $a^+ \not\letb b$ hence $\phi_{a^+,b}(\bfY_{>0}) = 0$. 
    If $a \letb b$ and $\Delta_{n+1}(a,b) \ge 1$, then $a^+ \letb b$, so the set $\LTplus$ from Definition~\ref{def:SkewLegPoly} for the tableau $T$ with columns $b$ and $a^+$ has one more element than the corresponding set for the tableau with columns $b$ and $a$.
    Hence
    \[
        \phi_{a^+,b}(\bfY_{>0}) 
        = \left( 1 - Y_{n+1}^{\Delta_{n+1}(a,b)} \right) \cdot \phi_{a,b}(\bfY_{>0}). 
    \]
    Interestingly, this formula is also valid if $\Delta_{n+1}(a,b) = 0$. The above inductive description is related to \cite[Lemma~4.4]{MaglioneVoll/24}.
    
    Combining all of these facts, we obtain for $Z^+$:
    \begin{align*}
        z^+_{a,b} 
        &= w_{a,b}(\bfY) 
        = z_{a,b} \,, \\
        z^+_{a,b^+} 
        &= w_{a,b^+}(\bfY) 
        = w_{a,b}(\bfY) 
        = z_{a,b} \,, \\
        z^+_{a^+,b} 
        &= w_{a^+,b}(\bfY) 
        =  \left( 1 - Y_{n+1}^{\Delta_{n+1}(a,b)} \right) \cdot w_{a,b}(\bfY) 
        = \left( 1 - Y_{n+1}^{\Delta_{n+1}(a,b)} \right) \cdot z_{a,b} \,, \\
        z^+_{a^+,b^+} 
        &= w_{a^+,b^+}(\bfY) 
        = w_{a,b}(\bfY) 
        = z_{a,b} \,.
    \end{align*}
    Similarly, for $M^+$:
    \begin{align*}
        m^+_{a,b} 
        &= (-1)^{\Delta_{n+2}(a,b)} \prod_{i=0}^{n+1} Y_i^{\Delta_i(a,b)} 
        \cdot w_{a,b}(\bfY^{-1}) 
        = Y_{n+1}^{\Delta_{n+1}(a,b)} \cdot m_{a,b} \,, \\
        m^+_{a,b^+} 
        &= (-1)^{\Delta_{n+2}(a,b^+)} \prod_{i=0}^{n+1} Y_i^{\Delta_i(a,b^+)} 
        \cdot w_{a,b^+}(\bfY^{-1}) \\
        &= -Y_{n+1}^{\Delta_{n+1}(a,b)} \cdot (-1)^{\Delta_{n+1}(a,b)} \prod_{i=0}^{n} Y_i^{\Delta_i(a,b)} 
        \cdot w_{a,b}(\bfY^{-1}) \\
        &= -Y_{n+1}^{\Delta_{n+1}(a,b)} \cdot m_{a,b} \,, \\
        m^+_{a^+,b^+} 
        &= (-1)^{\Delta_{n+2}(a^+,b^+)} \prod_{i=0}^{n+1} Y_i^{\Delta_i(a^+,b^+)} 
        \cdot w_{a^+,b^+}(\bfY^{-1}) \\
        &= Y_{n+1}^{\Delta_{n+1}(a,b)} \cdot m_{a,b} \,,
    \end{align*}
    and the slightly more involved entry
    \begin{align*}
        m^+_{a^+,b}
        &= (-1)^{\Delta_{n+2}(a^+,b)} \prod_{i=0}^{n+1} Y_i^{\Delta_i(a^+,b)} \cdot w_{a^+,b}(\bfY^{-1}) \\
        &= (-1)^{\Delta_{n+1}(a,b)} \cdot (- Y_{n+1}^{\Delta_{n+1}(a,b)}) \prod_{i=0}^{n} Y_i^{\Delta_i(a,b)} \cdot \left( 1 - Y_{n+1}^{-\Delta_{n+1}(a,b)} \right) \cdot w_{a,b}(\bfY^{-1}) \\  
        &= - Y_{n+1}^{\Delta_{n+1}(a,b)} \left( 1 - Y_{n+1}^{-\Delta_{n+1}(a,b)} \right) \cdot m_{a,b} 
        = \left( 1 - Y_{n+1}^{\Delta_{n+1}(a,b)} \right) \cdot m_{a,b} \,.
    \end{align*}
    Since $\Delta_{n+1}(a,b) = s_{n+1}(b) - s_{n+1}(a)$, the quantity $Y_{n+1}^{\Delta_{n+1}(a,b)} z_{a,b}$ is the entry in row $a$ and column $b$ of the matrix $D^{-1} Z D$, where $D = (d_{a,b})$ is a diagonal matrix with $d_{a,a} = Y_{n+1}^{s_{n+1}(a)}$ for all $a \in \Pt_{n,r}$.
    We can therefore write
    \[
        Z^+
        = \begin{pmatrix*}[c]
            Z & Z \\ 
            Z - D^{-1} Z D & Z
        \end{pmatrix*}
    \]
    and
    \[
        M^+
        = \begin{pmatrix*}[c]
            D^{-1} M D & -D^{-1} M D \\ 
            M - D^{-1} M D & \hphantom{-}D^{-1} M D
        \end{pmatrix*} .
    \]
    Multiplication, using the induction hypothesis $Z M = I$, gives
    \[
        Z^+ M^+
        = \begin{pmatrix*}[c]
            Z M & 0 \\ 
            -D^{-1} Z M D + Z M & D^{-1} Z M D
        \end{pmatrix*}
        = \begin{pmatrix*}[c]
            I & 0 \\ 
            0 & I
        \end{pmatrix*} ,
    \]
    as required.
\end{proof}

We can derive a similar formula for any $g \ge 1$.

\begin{lemma}\label{t:refined_gen_Z_and_M}
    For general $g \ge 1$, fix a linear order $\le_L$ on $\Pt_{\bfn,\bfr}$.
    Let $Z = (z_{a,b})$ and $M = (m_{a,b})$ be matrices of size $\prod_{i=1}^{g}(r_i+1)2^{n_i} \times \prod_{i=1}^{g}(r_i+1)2^{n_i}$, with rows and columns indexed by the elements of $\Pt_{\bfn,\bfr}$ according to the order $\le_L$, and with the following entries:
    \begin{align*}
        z_{a,b} 
        &= w_{a,b}(\bfY) \\
        m_{a,b} 
        &= \prod_{i=1}^{g} \Bigg( (-1)^{\Delta_{n_i+1}(a^{(i)},b^{(i)})} \prod_{j=0}^{n_i} Y_{i,j}^{\Delta_j(a^{(i)},b^{(i)})} \Bigg)
        \cdot w_{a,b}(\bfY^{-1})
    \end{align*}
    for all $a, b \in \Pt_{\bfn,\bfr}$, with notation as in Definition~\ref{def:w_ab_pair}; in particular, $a = (a^{(1)}, \dots, a^{(g)})$ and $b = (b^{(1)}, \dots, b^{(g)})$, where $a^{(i)}, b^{(i)} \in \Pt_{n_i, r_i}$ for each $i \in [g]$.
    
    Then $M$ is the inverse of $Z$.
\end{lemma}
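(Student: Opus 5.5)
The plan is to reduce the statement to the case $g=1$, Lemma~\ref{t:refined_Z_and_M}, by recognizing $Z$ and $M$ as Kronecker (tensor) products of the corresponding one-component matrices. As in the proof of Lemma~\ref{t:P_alternating_sum}, changing the linear order $\le_L$ conjugates both $Z$ and $M$ by the same permutation matrix, and this preserves the relation $ZM=I$. So we may choose the linear order freely. We take the lexicographic order on $\Pt_{\bfn,\bfr}=\Pt_{n_1,r_1}\times\dots\times\Pt_{n_g,r_g}$ induced by fixed linear orders on the factors, so that a matrix indexed by pairs $(a,b)$ with product-form entries is exactly a Kronecker product.

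For each $i\in[g]$, let $Z^{(i)}$ and $M^{(i)}$ be the matrices of Lemma~\ref{t:refined_Z_and_M} for $\Pt_{n_i,r_i}$, written in the variables $(Y_{i,0},\dots,Y_{i,n_i})$. By Definition~\ref{def:w_ab_pair},
\[
w_{a,b}(\bfY)=\prod_{i=1}^g \theta_{a^{(i)},b^{(i)}}(Y_{i,0})\,\phi_{a^{(i)},b^{(i)}}(\bfY_i)=\prod_{i=1}^g z^{(i)}_{a^{(i)},b^{(i)}} ,
\]
and the same factorization holds for $w_{a,b}(\bfY^{-1})$. The sign and monomial prefactor in the definition of $m_{a,b}$ is already written as a product over $i$. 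Combining these, $m_{a,b}=\prod_i m^{(i)}_{a^{(i)},b^{(i)}}$. Hence
\[
Z=Z^{(1)}\otimes\cdots\otimes Z^{(g)} \quad\text{and}\quad M=M^{(1)}\otimes\cdots\otimes M^{(g)} .
\]

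By the mixed-product property of Kronecker products over the commutative ring $\ZZ[\bfY^{\pm1}]$,
\[
ZM=(Z^{(1)}M^{(1)})\otimes\cdots\otimes(Z^{(g)}M^{(g)}) .
\]
Each factor $Z^{(i)}M^{(i)}$ equals $I$ by Lemma~\ref{t:refined_Z_and_M}, applied with variables renamed; this is legitimate because the entries of $Z^{(i)}$ and $M^{(i)}$ involve only the $i$-th block of variables. Therefore $ZM=I$. Since the matrices are square over a commutative ring, this also gives $MZ=I$.

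There is no real obstacle here. The only points needing care are:
\begin{itemize}
\item matching the lexicographic indexing to the Kronecker product convention;
\item checking that the vanishing cases, $a\not\letb b$ in $\Pt_{\bfn,\bfr}$, are consistent. This holds because $a\not\letb b$ means some component fails, which forces the corresponding $\phi$ factor, and hence the whole product, to be zero.
\end{itemize}
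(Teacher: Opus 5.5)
Your proposal is correct and is essentially the paper's proof: the paper also writes $Z=\bigotimes_i Z^{(i)}$ and $M=\bigotimes_i M^{(i)}$ and concludes from Lemma~\ref{t:refined_Z_and_M} via $(U\otimes V)^{-1}=U^{-1}\otimes V^{-1}$. You use the mixed-product property instead, and you explicitly justify choosing the lexicographic order so that the indexing matches the Kronecker convention; both are harmless and slightly more careful than the paper's presentation.
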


\begin{proof}
    Recall that the Kronecker product of an $n \times n$ matrix $U$ and an $n' \times n'$ matrix $V$ is the $nn' \times nn'$ matrix $U\otimes V$ defined as an $n \times n$ block matrix by
\[
    U\otimes V = \begin{pmatrix}
    u_{11}V & \cdots & u_{1n}V \\
     \vdots & \ddots & \vdots \\
    u_{n1}V & \cdots & u_{nn}V
    \end{pmatrix}.
\]
It is well known that $U\otimes V$ is invertible if and only if both $U$ and $V$ are invertible, and moreover in that case
$(U \otimes V)^{-1} = U^{-1} \otimes V^{-1}$.

The matrices $Z$ and $M$ have the structure of a Kronecker product:
\begin{align*}
    Z &= \bigotimes_{i=1}^{g} Z^{(i)} \,, &
    M &= \bigotimes_{i=1}^{g} M^{(i)},
\end{align*}
where $Z^{(i)}$ and $M^{(i)}$ are the $(r_i+1)2^{n_i} \times (r_i+1)2^{n_i}$ matrices defined in Lemma~\ref{t:refined_Z_and_M} for the poset $\Pt_{n_i, r_i}$. Thus,
\[
Z^{-1} = \Big( \bigotimes_i Z^{(i)} \Big)^{-1} = \bigotimes_i M^{(i)} = M. \qedhere
\]
\end{proof}

Informally, the matices $Z$ and $M$ in the previous lemma encode $\bfY$\!-analogs of the zeta and M\"obius functions of $(\Pt_{\bfn, \bfr}, \letb)$. This is a refinement of the $q$-analog of the zeta function of a poset recently introduced by Chapoton~\cite{Chapoton/24}.

\subsection{Proof of 
\texorpdfstring{$\HLS'$}{HLS'} reciprocity}

We are now ready to prove the reciprocity property of $\HLS'$, the modified $\HLS$ series. 
To that aim we first rewrite Definition~\ref{def:modified-skewHLS} of $\HLS'$, using Definition~\ref{def:W_poly}, with explicit attention to the length $k$ of each chain.

\begin{observation}\label{t:skewHLS'_successive}
    \[
        \HLS'_{\bfn, \bfr}(\bfY; \bfX) 
        = \sum_{k,\, C}
        \left(\, \prod_{j = 0}^{k} w_{c_j, c_{j+1}}(\bfY) \cdot \prod_{j = 1}^{k} \frac{X_{c_j}}{1-X_{c_j}} \right)
        \in \ZZ[\bfY](\bfX),
    \]
    where the summation is over all $k \ge 0$ and all strict $k$-element chains $C = {\{ c_1 \lttb \dots \lttb c_k \}}$ in the open interval $(\hat{0},\hat{1}) \subset \Pt_{\bfn, \bfr}$, with
    $c_0 = \hat{0}$ and $c_{k+1} = \hat{1}$.
\end{observation}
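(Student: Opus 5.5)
The observation is a direct unpacking of Definition~\ref{def:modified-skewHLS} combined with Definition~\ref{def:W_poly}, so the plan is simply to match the two expressions term by term.

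First, every strict chain $C$ in the open interval $(\hat{0},\hat{1}) \subset \Pt_{\bfn,\bfr}$ has a well-defined number of elements $k \ge 0$. Listing its elements in increasing order gives $c_1 \lttb \dots \lttb c_k$. Since each $c_j$ lies strictly between $\hat{0}$ and $\hat{1}$, the sequence
\[
    \hat{0} \letb c_1 \letb \dots \letb c_k \letb \hat{1}
\]
is a $k$-multichain of the kind required in Definition~\ref{def:W_poly}. Setting $c_0 = \hat{0}$ and $c_{k+1} = \hat{1}$, that definition gives
\[
    W_C(\bfY) = \prod_{j=0}^{k} w_{c_j,c_{j+1}}(\bfY).
\]
This includes the empty chain, $k=0$. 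There $W_\varnothing(\bfY) = w_{\hat{0},\hat{1}}(\bfY)$, which equals $1$ by the remark after Definition~\ref{def:w_ab_pair}, and the empty product of $X$-factors is $1$.

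Second, the factor $\prod_{c\in C} \frac{X_c}{1-X_c}$ in Definition~\ref{def:modified-skewHLS} is literally $\prod_{j=1}^{k} \frac{X_{c_j}}{1-X_{c_j}}$ in this indexing. Hence each summand of the definition coincides with the corresponding summand in the observation. The map from chains $C$ to pairs $(k, C)$ is a bijection, since $k$ is determined by $C$. Regrouping the summation by $k$ therefore yields the stated formula.

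Finally, I will note that $\Pt_{\bfn,\bfr}$ is finite, so only finitely many chains occur and $k$ is bounded. The sum is thus a finite sum of elements of $\ZZ[\bfY](\bfX)$, and no convergence issue arises. There is no real obstacle here. The only point needing care is confirming that the boundary conventions $c_0=\hat{0}$ and $c_{k+1}=\hat{1}$ in Definition~\ref{def:W_poly} agree with the ones used in the observation, in particular for $k=0$.
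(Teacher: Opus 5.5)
Your proposal is correct and matches the paper, which gives no separate proof and presents this observation simply as Definition~\ref{def:modified-skewHLS} rewritten via Definition~\ref{def:W_poly}, with the chain length $k$ and the boundary conventions $c_0=\hat{0}$, $c_{k+1}=\hat{1}$ made explicit. Your remark that $w_{\hat{0},\hat{1}}(\bfY)=1$ is true but not needed, since the $k=0$ summand is $w_{\hat{0},\hat{1}}(\bfY)$ on both sides whatever its value.
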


\begin{proof}[Proof of Theorem~\ref{t:skewModifiedHLS_reciprocity}]
    By Observation~\ref{t:skewHLS'_successive},
    \begin{equation}\label{eq:modifiedHLS'_inv}
        \HLS'_{\bfn, \bfr}(\bfY^{-1}; \bfX^{-1}) 
        = \sum_{k,\, C}
        \left(\, \prod_{j = 0}^{k} w_{c_j, c_{j+1}}(\bfY^{-1}) \cdot \prod_{j = 1}^{k} \frac{X_{c_j}^{-1}}{1-X_{c_j}^{-1}} \right) ,
    \end{equation}
    where the summation is over all $k \ge 0$ and all strict $k$-element chains $C = \{ c_1 \lttb \dots \lttb c_k \}$ in the open interval $(\hat{0},\hat{1}) \subset \Pt_{\bfn, \bfr}$, with $c_0 = \hat{0}$ and $c_{k+1} = \hat{1}$. 
    Now
    \[
        \frac{X_{c_j}^{-1}}{1 - X_{c_j}^{-1}}
        = \frac{1}{X_{c_j} - 1}
        = \frac{-1}{1 - X_{c_j}}
        = - \left( 1 + \frac{X_{c_j}}{1 - X_{c_j}} \right) ,
    \]
    so that
    \[
        \prod_{j = 1}^{k} \frac{X_{c_j}^{-1}}{1 - X_{c_j}^{-1}} 
        = (-1)^k \sum_{D \subseteq C} \prod_{s = 1}^{\ell} \frac{X_{d_s}}{1 - X_{d_s}},
    \]
    where the summation is over all subsets $D = \{d_1, \ldots, d_\ell\}$ of $C = \{c_1, \ldots, c_k\}$, namely over all $\ell \ge 0$ and all $\ell$-element chains $D = (\hat{0} = d_0 \lttb d_1 \lttb \cdots \lttb d_\ell \lttb d_{\ell+1} = \hat{1})$ which are coarser than the chain $C$.
    Substituting this into equation~\eqref{eq:modifiedHLS'_inv} and changing the order of summation, we get
    \begin{align*}
        \HLS'_{\bfn, \bfr}(\bfY^{-1}; \bfX^{-1}) 
        &= \sum_{k,\, C}
        \left( \prod_{j = 0}^{k} w_{c_j, c_{j+1}}(\bfY^{-1}) \cdot 
        (-1)^k \sum_{\ell,\, D \subseteq C} \prod_{s = 1}^{\ell} \frac{X_{d_s}}{1 - X_{d_s}} \right) \\ 
        &= \sum_{\ell,\, D}
        \left( \prod_{s = 1}^{\ell} \frac{X_{d_s}}{1 - X_{d_s}} \cdot
        \sum_{k,\, C \supseteq D} (-1)^k 
        \,\prod_{j = 0}^{k} w_{c_j, c_{j+1}}(\bfY^{-1}) \right) .
    \end{align*}
    The subset $D$ partitions $C \setminus D$ into $\ell + 1$ disjoint chains $C^{(0)}, \ldots, C^{(\ell)}$. If for each $0 \le s \le \ell$ the chain $C^{(s)}$ is $d_s \lttb c_{s,1} \lttb \dots \lttb c_{s,k_s} \lttb d_{s+1}$, of size $k_s$, then clearly $\sum_s k_s = k - \ell$.
    We also set $c_{s,0} \coloneqq d_s$ and $c_{s,k_s + 1} \coloneqq d_{s+1}$ for every $0 \le s \le \ell$.
    Therefore, we can express $\HLS'_{\bfn, \bfr}(\bfY^{-1}; \bfX^{-1})$ as
    \[
        \sum_{\ell,\, D} 
        \left( \prod_{s = 1}^{\ell} \frac{X_{d_s}}{1 - X_{d_s}}
        \cdot (-1)^\ell \prod_{s = 0}^{\ell}
        \,\sum_{k_s,\, C^{(s)} \subseteq (d_s, d_{s+1})} (-1)^{k_s}
        \,\prod_{j = 0}^{k_s} w_{c_{s,j}, c_{s,j+1}}(\bfY^{-1}) \right) .
    \]
    
    Recall the matrix $Z = (z_{a,b}) = (w_{a,b}(\bfY))$ and its inverse $M = (m_{a,b})$ from Lemma~\ref{t:refined_gen_Z_and_M}, with rows and columns indexed by the elements of $\Pt_{\bfn,\bfr}$. The entry of $M$ in row $d_s$ and column $d_{s+1}$ is
    \[
        m_{d_s,d_{s+1}} 
        = \prod_{i=1}^{g} \left( (-1)^{\Delta_{n_i+1}(d^{(i)}_s, d^{(i)}_{s+1})} \prod_{j=0}^{n_i} Y_{i,j}^{\Delta_j(d^{(i)}_s, d^{(i)}_{s+1})} \right)
        \cdot w_{d_s, d_{s+1}}(\bfY^{-1}) .
    \]
    On the other hand, by Lemma~\ref{t:P_alternating_sum} this entry is
    \[
        m_{d_s,d_{s+1}} 
        = \sum_{k_s,\, C^{(s)} \subseteq (d_s, d_{s+1})} (-1)^{k_s+1}
        \,\prod_{j = 0}^{k_s} w_{c_{s,j},c_{s,j+1}}(\bfY).
    \]
    Replacing $\bfY$ by $\bfY^{-1}$, the equality of these two expressions becomes
    \begin{multline*}
        (-1) \cdot \sum_{k_s,\, C^{(s)} \subseteq (d_s, d_{s+1})} (-1)^{k_s}
        \,\prod_{j = 0}^{k_s} w_{c_{s,j},c_{s,j+1}}(\bfY^{-1}) \\
        = \prod_{i=1}^{g} \left( (-1)^{\Delta_{n_i+1}(d^{(i)}_s, d^{(i)}_{s+1})} \prod_{j=0}^{n_i} Y_{i,j}^{-\Delta_j(d^{(i)}_s, d^{(i)}_{s+1})} \right)
        \cdot w_{d_s, d_{s+1}}(\bfY)\,.
    \end{multline*}
    It follows that $\HLS'_{\bfn, \bfr}(\bfY^{-1}; \bfX^{-1})$ is

    \begin{align*}
        &\quad \sum_{\ell,\, D} 
        \Bigg( \prod_{s = 1}^{\ell} \frac{X_{d_s}}{1 - X_{d_s}} \cdot 
        (-1) \cdot \prod_{s = 0}^{\ell} %
        \prod_{i=1}^{g} \Bigg(\! (-1)^{\Delta_{n_i+1}(d^{(i)}_s, d^{(i)}_{s+1})} \prod_{j=0}^{n_i} Y_{i,j}^{-\Delta_j(d^{(i)}_s, d^{(i)}_{s+1})} \Bigg) %
        w_{d_s, d_{s+1}}(\bfY) \!\Bigg) \\
        &= \sum_{\ell,\, D} 
        \!\Bigg( \prod_{s = 1}^{\ell} \frac{X_{d_s}}{1 - X_{d_s}} \cdot 
        (-1) \cdot 
        \prod_{i=1}^{g} \Bigg(\! (-1)^{\Delta_{n_i+1}(d^{(i)}_0,d^{(i)}_{\ell+1})} \prod_{j=0}^{n_i} Y_{i,j}^{-\Delta_j(d^{(i)}_0,d^{(i)}_{\ell+1})} \Bigg)
        \cdot \prod_{s = 0}^{\ell} w_{d_s, d_{s+1}}(\bfY) \!\Bigg) \!.
    \end{align*}
    By Definition~\ref{def:s_and_Delta}, for each $1\le i \le g$ and $1\le j \le n_i+1$:
    \[
        \Delta_{j}(d^{(i)}_0,d^{(i)}_{\ell+1}) 
        = \Delta_{j}(\varnothing, E_{n_i, r_i}) = r_i + j - 1,
    \]
    while 
    \[
        \Delta_{0}(d^{(i)}_0,d^{(i)}_{\ell+1}) 
        = \Delta_{0}(\varnothing, E_{n_i, r_i}) = \binom{r_i}{2}.
    \]
    Thus $\HLS'_{\bfn, \bfr}(\bfY^{-1}; \bfX^{-1})$ is
    \begin{align*}
        &\quad\, (-1) \cdot \prod_{i=1}^{g} \Bigg(\! (-1)^{r_i + n_i} Y_{i, 0}^{-\binom{r_{i}}{2}} \prod_{j=1}^{n_i} Y_{i, j}^{-(r_i + j - 1)} \Bigg) \cdot \sum_{\ell,\, D} 
        \left( \prod_{s = 1}^{\ell} \frac{X_{d_s}}{1 - X_{d_s}}
        \cdot \prod_{s = 0}^{\ell} w_{d_s, d_{s+1}}(\bfY) \right) \\
        &= (-1)^{-1 + \sum_{i=1}^{g}(n_{i} + r_{i})}
        \prod_{i=1}^{g} \Bigg( Y_{i, 0}^{-\binom{r_{i}}{2}} \prod_{j=1}^{n_i} Y_{i, j}^{-(r_i + j - 1)} \Bigg)
        \cdot \HLS'_{\bfn, \bfr}(\bfY; \bfX) ,
    \end{align*}
    which completes the proof.
\end{proof}

\section{Further discussion}
\label{sec:further-discussion}

Definition~\ref{def:skewHLS} of $\HLS_{\bfn, \bfr}$ is reminiscent of the flag Hilbert--Poincar\'e series of a hyperplane arrangement \cite{MVhyper/24}, and the motivic zeta function of a matroid \cite{JensenKutlerUsatine/21}.
Theorem~\ref{t:skewModifiedHLS_reciprocity} has an equivalent formulation in the spirit of \cite[Corollary~3.3]{MVhyper/24}. %
For each $S \subseteq (\hat{0}, \hat{1}) = \Pt_{\bfn, \bfr} \setminus \{\hat{0}, \hat{1}\}$, 
let $\OC(S)$ be the order (or chain) complex of the poset $S$. The elements of $\OC(S)$ are the strict chains in $S$. 
By multiplying the formula in Theorem~\ref{t:skewModifiedHLS_reciprocity} by $\prod_{a \in (\hat{0}, \hat{1})}(1-X_a)$ and comparing the coefficients of monomials $\prod_{a\in S} X_a$ for each $S$, the reciprocity property of the $\HLS'_{\bfn, \bfr}$ series is seen to be equivalent to the following statement.

\begin{corollary}\label{cor:alternative-S}
    For any $S \subseteq (\hat{0}, \hat{1})$
    denote $\Sc \coloneqq (\hat{0}, \hat{1}) \setminus S$. Then
    \[
        \sum_{C \in \OC(S)} (-1)^{|C|} W_C(\bfY) 
        = (-1)^{N-1} K(\bfY) \cdot \!\sum_{C \in \OC(\Sc)} (-1)^{|C|} W_C(\bfY^{-1}),
    \]
    where 
    $N=\sum_{i=1}^{g}(n_{i} + r_{i})$ and 
    \[
        K(\bfY)
        = %
        \prod_{i=1}^{g} \Bigg( Y_{i, 0}^{\binom{r_{i}}{2}} \prod_{j=1}^{n_i} Y_{i, j}^{r_i + j - 1} \Bigg).
    \]
\end{corollary}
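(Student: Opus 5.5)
The plan is to derive Corollary~\ref{cor:alternative-S} directly from Theorem~\ref{t:skewModifiedHLS_reciprocity} by clearing denominators and comparing coefficients, as indicated just before the statement. Write $Q \coloneqq (\hat{0},\hat{1})$ and $F(\bfX) \coloneqq \prod_{a \in Q}(1-X_a)$. For a chain $C \subseteq Q$ we have
\[
F(\bfX)\prod_{c\in C}\frac{X_c}{1-X_c} = \prod_{c\in C}X_c \prod_{a\in Q\setminus C}(1-X_a)
= \sum_{S \supseteq C} (-1)^{|S|-|C|} \prod_{a\in S} X_a .
\]
Hence $F(\bfX)\,\HLS'_{\bfn,\bfr}(\bfY;\bfX)$ is a polynomial which is multilinear in the variables $X_a$, $a\in Q$. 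Its coefficient of $\prod_{a\in S}X_a$ is $(-1)^{|S|}\sum_{C\in\OC(S)}(-1)^{|C|}W_C(\bfY)$.

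Next I treat the left side of Theorem~\ref{t:skewModifiedHLS_reciprocity} in the same way. First compute
\[
F(\bfX^{-1}) = \prod_{a\in Q}(1-X_a^{-1}) = (-1)^{|Q|}\Big(\prod_{a\in Q}X_a\Big)^{-1}F(\bfX).
\]
Multiplying the theorem's identity by $F(\bfX)$ therefore gives
\[
(-1)^{|Q|}\Big(\prod_{a\in Q}X_a\Big)\,\big[F\cdot \HLS'_{\bfn,\bfr}\big](\bfY^{-1};\bfX^{-1})
= (-1)^{N-1}K(\bfY)^{-1}\,\big[F\cdot\HLS'_{\bfn,\bfr}\big](\bfY;\bfX).
\]
Substituting $\bfX\mapsto\bfX^{-1}$ in the multilinear polynomial and multiplying by $\prod_{a\in Q}X_a$ sends each monomial $\prod_{a\in S^c}X_a$ to $\prod_{a\in S}X_a$. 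So the coefficient of $\prod_{a\in S}X_a$ on the left is $(-1)^{|Q|}(-1)^{|S^c|}\sum_{C\in\OC(S^c)}(-1)^{|C|}W_C(\bfY^{-1})$.

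Comparing coefficients of $\prod_{a\in S}X_a$ on both sides gives
\[
(-1)^{|Q|+|S^c|}\sum_{C\in\OC(S^c)}(-1)^{|C|}W_C(\bfY^{-1})
= (-1)^{N-1}K(\bfY)^{-1}(-1)^{|S|}\sum_{C\in\OC(S)}(-1)^{|C|}W_C(\bfY).
\]
Since $|Q|+|S^c|+|S| = 2|Q|$ is even, the signs $(-1)^{|Q|+|S^c|}$ and $(-1)^{|S|}$ agree and cancel. Multiplying through by $(-1)^{N-1}K(\bfY)$ then yields the corollary.

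Conversely, the same computation read backwards shows that the corollary implies the theorem. This is because multilinear polynomials are determined by their coefficients, which justifies the word ``equivalent''.

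The only delicate step is the sign and monomial bookkeeping in the substitution $\bfX\mapsto\bfX^{-1}$. In particular one must check that the complementation $S\leftrightarrow S^c$ arises exactly from the factor $\prod_{a\in Q} X_a$, and that the parity of $|Q|$ cancels. Everything else is routine expansion.
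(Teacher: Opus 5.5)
Your proposal is correct and follows the paper's own argument: multiply the identity of Theorem~\ref{t:skewModifiedHLS_reciprocity} by $\prod_{a\in(\hat{0},\hat{1})}(1-X_a)$ and compare coefficients of $\prod_{a\in S}X_a$. The paper states this in a single sentence, and your bookkeeping fills in the details correctly: the identity $F(\bfX^{-1})=(-1)^{|Q|}\bigl(\prod_{a\in Q}X_a\bigr)^{-1}F(\bfX)$, the exchange $S\leftrightarrow \Sc$ produced by the factor $\prod_{a\in Q}X_a$, and the sign identity $(-1)^{|Q|+|\Sc|}=(-1)^{|S|}$.
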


This corollary is also reminiscent of
combinatorial Alexander duality for Eulerian posets; see, e.g., \cite[Theorem 1.1]{BjornerTancer/09} and \cite[Proposition~2.2]{Stanley/82}. Note that $\Pt_{\bfn, \bfr}$ is, in general, not Eulerian. It will be interesting to find another proof of the reciprocity via such route.

There is a natural generalization of the poset $(\Pt_{n,r}, \letb)$ introduced in Definition~\ref{def:Pnr_intro}.
Let $A = (m_0, m_1, \dots, m_n) \in \NN_0^{n+1}$ be a tuple of non-negative integers. Consider the poset $(P_A, \letb)$ whose elements are all the tuples $(a_0, a_1, \dots, a_n) \in \NN_0^{n+1}$ where $a_i \le m_i$ for all $0 \le i \le n$, with the same partial order $\letb$. 
Equivalently, the elements of $P_A$ can be interpreted as sub-multisets of $\{0^{m_0}, 1^{m_1}, \dots, n^{m_n}\}$. The case studied in this paper is $A = (r, 1, \dots, 1)$.

\begin{problem}\label{prob:HLS-generalization}
    Find a generalization, to any $A$, of the skew Hall--Littlewood--Schubert series, namely of the polynomials $W_C(\bfY)$, that satisfies a self-reciprocity property.
\end{problem}

\printbibliography

\end{document}